\theoremstyle{plain}
\newtheorem{theorem}{Theorem}[section]
\newtheorem{lemma}[theorem]{Lemma}
\newtheorem{proposition}[theorem]{Proposition}
\theoremstyle{definition}
\newtheorem{example}[theorem]{Example}
\newcommand{\set}[1]{\left\{#1\right\}}
\newcommand{\st}{\ :\ }
\newcommand{\nat}{\mathbb{N}}
\newcommand{\intg}{\mathbb{Z}}
\newcommand{\rel}{\mathbb{R}}
\newcommand{\rat}{\mathbb{Q}}
\newcommand{\pam}{{\sc pam}\xspace}
\newcommand{\ie}{{\em i.e.}\xspace}
\newcommand{\eg}{{\em e.g.}\xspace}
\newcommand{\etc}{{\em etc.}\xspace}
\newcommand\defeq{\mathrel{\overset{\makebox[0pt]{\mbox{\normalfont\tiny\sffamily def}}}{=}}}
\renewcommand{\mod}[1]{\qquad (\mathrm{mod}\ #1)}
\newcommand{\unit}{U}
\newcommand\restrict[1]{\raisebox{-.5ex}{$|$}_{#1}}
\newcommand{\im}{\mathrm{Im}}
\newcommand\orbit{\mathcal O}
\newcommand\algo{\mathbb A}
\newcommand\mfbox{\fcolorbox{gray}{white}}
\title{Reachability in Injective Piecewise~Affine~Maps}
\author{Faraz Ghahremani\footnote{Sharif University of Technology, Iran. \href{mailto:faraz.ghahremani@yahoo.com}{faraz.ghahremani@yahoo.com}.}
  \and Edon Kelmendi\footnote{Queen Mary University of London, UK. \href{mailto:e.kelmendi@qmul.ac.uk}{e.kelmendi@qmul.ac.uk}}
  \and Jo\"el Ouaknine\footnote{Max Planck Insitute for Software Systems, Saarland University Campus, Germany. Also affiliated with Keble College, Oxford as \href{http://emmy.network/}{emmy.network} Fellow, and supported by DFG grant 389792660 as part of TRR 248 (see \href{https://perspicuous-computing.science}{https://perspicuous-computing.science}). \href{mailto:joel@mpi-sws.org}{joel@mpi-sws.org}.}} 
\date{}
\begin{document}
\maketitle
\begin{abstract}
  One of the most basic, longstanding open problems in the theory of dynamical systems is whether reachability is decidable for one-dimensional piecewise affine maps with two intervals. In this paper we prove that for injective maps, it is decidable.

  We also study various related problems, in each case either establishing decidability, or showing that they are closely connected to Diophantine properties of certain transcendental numbers, analogous to the positivity problem for linear recurrence sequences. Lastly, we consider topological properties of orbits of one-dimensional piecewise affine maps, not necessarily with two intervals, and negatively answer a question of Bournez, Kurganskyy, and Potapov, about the set of orbits in expanding maps. 
\end{abstract}
\section{Introduction}
A \textbf{piecewise affine map} (abbreviated \pam) is a map from the half-open unit interval to itself
\begin{align*}
  f\st [0,1)\to [0,1),
\end{align*}
with the property that there exist half-open intervals $I_1,\ldots, I_\ell$ partitioning $[0,1)$ such that the restriction of $f$ to any of these intervals $I_k$ is affine:
\begin{align*}
  f\restrict{I_k}(x)\equiv a_kx+b_k,
\end{align*}
where the constants $a_k,b_k$ are rational.

Here is an example of a \pam with two intervals:
\begin{align*}
  f(x)=
  \begin{cases}
    \frac 2 3 x + \frac 2 3 & \text{ if }0\le x < \frac 1 2,\\
    \frac 4 3 x - \frac 2 3 & \text{ if }\frac 1 2 \le x < 1.
  \end{cases}  
\end{align*}
The basic decision problem about orbits of these maps is the \textbf{reachability problem}: Given a \pam~$f$, an initial point $x_0$, and a target point $t$ in the unit interval, does there exist some natural number $n\in\nat$ such that
\begin{align*}
  f^n(x_0)\defeq \underbrace{f(f(\cdots f}_{n\text{ times }}(x_0)\cdots )=t,
\end{align*}
in other words, can we reach $t$ from $x_0$ by repeatedly applying~$f$? It is a longstanding open question whether there is a procedure to decide reachability for piecewise affine maps, even when there are only two intervals $I_1,I_2$.

Although such maps have long been studied in the dynamical systems community, to the best of our knowledge the first explicit mention of the reachability decision problem appeared in the 1994 paper of Koiran, Cosnard, and Garzon~\cite{koiran94_comput_with_low_dimen_dynam_system}. In that paper the authors prove that \emph{two-dimensional} piecewise affine maps (where instead of intervals, a bounded region of the plane is partitioned into polytopes) have an undecidable reachability problem (Theorem 3.1); however they leave the one-dimensional problem open. 

Why is this simple problem considered interesting? There are two main reasons. One, it is arguably the simplest type of dynamical system that can have immensely complicated behaviour, and two, the reachability problem for \pam is related to numerous problems in mathematics and computer science.

We mention two families of piecewise affine maps that have been studied in some depth and possess a rich theory with connections to many parts of mathematics: interval exchange transformations and $\beta$-expansions. The former are maps that permute the intervals, and are related to Abelian differentials, continued fraction expansions, polygonal billiards \etc, see~\cite{viana06_ergod_theor_inter_exchan_maps}, and have had recent resurgence in interest due to a number of breakthroughs. The latter are expansions of numbers in nonintegral bases, and still shrouded in mystery (except from the metrical point of view),  see \cite[Chapter~9]{bugeaud12}, \cite[Section~5]{bournez18_reach_probl_one_dimen_piecew_affin_maps}, \cite[Section~7]{boker15_target_discoun_sum_probl}.

From a computer-science perspective, \pam are related to recurrent neural networks and cellular automata \cite{koiran94_comput_with_low_dimen_dynam_system}. The target discounted-sum problem \cite{boker15_target_discoun_sum_probl} is reducible to reachability for one dimensional \pam, and so are a plethora of open problems about discounted automata and games, inclusion in a generalised Cantor set \etc Furthermore, \pam play a central role in the study of hybrid systems, and the literature is replete with reductions from the reachability problem for \pam to establish hardness or openness of various decision problems. See \eg \cite{asarin02_widen_bound_decid_undec_hybrid_system, asarin12}.  

Finally, another important motivation for studying such maps is the following. \pam reachability is equivalent to the halting problem for a simple family of programs, namely loops of the following kind:
\begin{center}
\mfbox{  
\begin{minipage}{0.25\textwidth}
\begin{algorithmic}[H]
    \State {$x\gets x_0$}
    \While {$x\ne t$}
      \If {$x<c$}
        \State $x\gets a_1 x+b_1$
      \Else
        \State $x\gets a_2 x +b_2$
      \EndIf
    \EndWhile
  \end{algorithmic}
\end{minipage}
}
\end{center}
Algorithms able to decide the halting problem for these simple loops have potential utility in the wider framework of software verification. 

What is known about the reachability problem for piecewise affine maps? Here are two results directly related to the present paper: First, for purely affine maps, the corresponding reachability question is known as the \emph{orbit problem} and has been shown in the 1980s by Kannan and Lipton to be decidable in all dimensions~\cite{kannan86_polyn_time_algor_orbit_probl}. Second, as mentioned above, for two-dimensional \pam the reachability problem is undecidable~\cite[Theorem~3.1]{koiran94_comput_with_low_dimen_dynam_system}, hence the present restriction to one-dimensional maps.
Moreover, several problems related to reachability have been considered by Koiran and co-authors, see for example \cite{koiran2001topological, blondel2001stability, blondel2001deciding}. 

More recently, Bournez, Kurganskyy, and Potapov \cite{bournez18_reach_probl_one_dimen_piecew_affin_maps,kurganskyy2016reachability}, using $p$-adic valuations, proved (among other things) that complete \pam with two intervals have a decidable reachability problem. A \pam with two intervals is said to be \textbf{complete} if the image of each affine component is the unit interval, that is:
\begin{align*}
  \im(f\restrict{I_1})=\im(f\restrict{I_2})=[0,1). 
\end{align*}
There have also been various important results on the dynamics of \pam from researchers in the ergodic-theory, number-theory, and dynamical-systems communities, but none---to the best of our knowledge---enabling us to design algorithms for the decision problems that concern us. 

\subsection{Contributions}
\noindent The main contribution of this paper is the following:
\begin{theorem}
  \label{th:injective}
  The reachability problem is decidable for injective piecewise affine maps with two intervals. 
\end{theorem}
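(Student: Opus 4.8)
The plan is to run a structured case analysis, leaning throughout on the fact that injectivity on $[0,1)$ forces $f(I_1)$ and $f(I_2)$ to be disjoint subintervals of $[0,1)$, whence $|a_1|\cdot|I_1|+|a_2|\cdot|I_2|\le 1$; in particular the two slopes cannot both have absolute value exceeding $1$, and at most one of $f(I_1),f(I_2)$ can straddle the breakpoint $c$ (writing $I_1=[0,c)$, $I_2=[c,1)$). If neither image straddles $c$, then each $I_k$ is mapped into a single interval, so the itinerary of any orbit is eventually periodic with period at most $2$ and the orbit is ultimately governed by iterating a single affine map ($f$ or $f^2$); for a single affine map $x\mapsto\alpha x+\beta$ the reachability question reduces to solving one exponential equation in $n$ and is decidable (alternatively one invokes decidability of the orbit problem~\cite{kannan86_polyn_time_algor_orbit_probl}). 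So the whole difficulty lives in the case where exactly one image, say $f(I_1)$, straddles $c$.

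There I would split further by the signs of $a_1,a_2$ and by which side of $f(I_1)$ the interval $f(I_2)$ falls on. Several of these sub-cases again collapse: if $f(I_2)\subseteq I_2$ then once the orbit enters $I_2$ it remains there forever under a single affine map, and whether and when it enters $I_2$ is determined by analysing the affine map $f|_{I_1}$ on $I_1$ (its fixed point, and whether the monotone or oscillating iterates escape $I_1$). What is left is the genuinely mixing situation: $I_1$ splits into a part returning to $I_1$ and a part sent into $I_2$, while $I_2$ is sent entirely into $I_1$, so that the itinerary never contains two consecutive visits to $I_2$.

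For this remaining case the key device is a renormalization by first-return maps, in the spirit of Rauzy induction for interval exchanges and of the Euclidean algorithm for rotations: the first-return map of $f$ to $I_1$ is again an injective two-interval piecewise affine map with rational data, with slopes $a_1$ and $a_1a_2$, and rescaling $I_1$ to $[0,1)$ yields a fresh instance of exactly the same shape, through which one tracks the positions of $x_0$ and of the target $t$ (a return-map step landing in the second piece corresponds to a one-step excursion of the true orbit through $I_2$, so target hits of the form $t=f(x)$ must also be recorded). One then argues that, all parameters being rational, this renormalization cannot keep producing genuinely new instances indefinitely: it must either terminate at one of the already-resolved non-mixing configurations, or settle into a periodic pattern. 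Termination reduces reachability to a single affine orbit problem. Periodicity means either that the itinerary of $x_0$ is eventually periodic, so that $f^n(x_0)$ is ultimately governed by a fixed periodic composition of affine maps and reachability is again an exponential Diophantine equation in $n$, or, in the borderline surjective regime where $f$ is an affine interval exchange, i.e.\ a piecewise-linear circle homeomorphism with irrational rotation number, that $f$ is semiconjugate to a rotation and reachability becomes the question of whether $n\rho$ ever hits a prescribed value modulo $1$, to be decided from the rational structure of the data together with Baker-type lower bounds for linear forms in logarithms.

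The main obstacle is exactly the control of this renormalization in the mixing case: exhibiting a monovariant that provably forces termination or periodicity, the analogue of the finiteness of the continued fraction expansion of a rational number, but now entangled with the multiplicative action of the slopes on the interval lengths, so that the naive length and denominator monovariants need not decrease; and, in the surjective regime, computing the rotation number and the conjugacy precisely enough to settle the resulting Diophantine condition. One technical convenience that recurs, available because $f$ is injective, is that $f^{-1}$ is a well-defined two-piece piecewise affine map on $\im f$: reachability of $t$ from $x_0$ is equivalent to asking whether the deterministic backward orbit of $t$, which simply terminates as soon as it leaves $\im f$, ever reaches $x_0$, and carrying out the analysis in whichever of the two directions is contracting tends to be the cleanest.
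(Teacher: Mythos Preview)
Your case analysis up to and including the bijective case is broadly sound and parallels the paper's structure. For bijections you gesture at conjugacy to a rotation plus Baker bounds; the paper does exactly this via Boshernitzan's explicit homeomorphism $h(x)=\log(\alpha x+1)/\log(\alpha+1)$, which is clean enough that the reachability question becomes the purely rational equation $(\alpha+1)^m(\alpha t+1)=(\alpha x_0+1)(\alpha d+1)^n$, so no transcendence theory is needed there.

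The genuine gap is the non-surjective mixing case. You propose a first-return renormalization and then write, candidly, that ``the main obstacle is exactly the control of this renormalization\ldots exhibiting a monovariant that provably forces termination or periodicity.'' That obstacle is real and you do not overcome it: with non-unit slopes the induced map has slopes $a_1$ and $a_1a_2$, the interval lengths scale multiplicatively, and there is no obvious analogue of the Euclidean descent. Moreover, even if the renormalization were eventually periodic, that would not by itself give an eventually periodic \emph{itinerary} of $x_0$ (think of the golden-ratio rotation, whose Gauss map is a fixed point yet whose itineraries are Sturmian and aperiodic), so your inference from ``periodic renormalization'' to ``orbit governed by a fixed composition of affine maps'' needs a separate argument you have not supplied.

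The paper bypasses this entirely. For the non-surjective positive-slope case it invokes the analysis of Laurent and Nogueira, which shows that such a map has a \emph{unique} finite cycle $C=\{c_0,\dots,c_{q-1}\}$ to which every other orbit converges, with $q$ the denominator of a computable rational rotation number and with $f^n([0,1))$ shrinking at a computable geometric rate onto $C$. One first decides whether $x_0$ or $t$ lies on $C$ by iterating $q$ times; if neither does, one computes an explicit $N$ beyond which $f^n([0,1))$ is a union of intervals each shorter than the distance from $t$ to $C$, so $t$ cannot be hit after step $N$. This is a structural/convergence argument, not an inductive one, and it is what your renormalization sketch is missing.
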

It is interesting to compare and contrast our result to that of Bournez, Kurganskyy, and Potapov~\cite[Corollary~10]{bournez18_reach_probl_one_dimen_piecew_affin_maps}: their theorem solves the case of complete \pam, which are the extreme opposite of injective \pam that we consider (for every point $y$ in the image, there are two distinct points that are mapped to $y$). One might hope that the accumulation of techniques for handling these two extreme families can lead to a better understanding of the general problem.

The proof of \Cref{th:injective} makes use of two main tools. We begin by a number of observations that reduce the general problem to reachability in two families of \pam: bijections, and certain injective maps that have positive slopes and are not surjective. For the former, we utilise a topological conjugacy idea of Boshernitzan~\cite{boshernitzan93_dense_orbit_ration} to reduce the reachability problem to one in a much simpler dynamical system. For the latter, we crucially apply the recent detailed analysis of orbits of certain \pam due to Laurent and Nogueira in~\cite{LG21, LGarxiv}, which uses Hecke-Mahler series. 

We then present a couple additional results in~\Cref{sec:related problems}; these include natural extensions, such as point-to-interval and interval-to-interval reachability, as well as deciding whether the orbit of a given point $x_0\in [0,1)$ is periodic, \ie are there distinct positive integers $n,m$ such that 
\begin{align*}
  f^n(x_0)=f^m(x_0)?
\end{align*}

In~\Cref{sec:related problems} we also prove, perhaps surprisingly, that even for injective maps with two intervals, there are some natural problems that are rather difficult, due to their connection with Diophantine approximation. More precisely, we consider the question of whether there exists some $n\in\nat$ such that
\begin{align*}
  nf^n(x_0)<c,
\end{align*}
where $c\in\rat$ is given as input. A hypothetical procedure to decide this problem could also be used to compute the Lagrange constants of certain transcendental numbers. An analogous correspondence holds for the positivity problem for linear recurrence sequences \cite[Section~5]{ouaknine13_posit_probl_low_order_linear_recur_sequen}, however for a different set of transcendental numbers. 

In the last part of this paper, \Cref{sec:dichotomy}, we study topological properties of orbits. This is inspired by a question of Bournez \emph{et al.}\ (see Hypothesis~1 in \cite{bournez18_reach_probl_one_dimen_piecew_affin_maps} and in~\cite{kurganskyy2016reachability}). They ask whether all expanding \pam (\ie\ maps comprising affine components all of whose slopes are $>1$) have orbits that are either periodic, or dense in the whole unit interval. We answer this question negatively by exhibiting a counterexample, and also prove a weaker statement (which can likely be strengthened): namely that expanding \pam have orbits that are either periodic or have infinitely many accumulation points. 

\section{Reachability for Injective Maps}
\label{sec:reachability}
In this section we prove our main result, \Cref{th:injective}: the reachability problem is decidable for injective \pam with two intervals. To this end, we give a couple of definitions and establish some properties of general piecewise affine maps. 

It is sometimes convenient to depict \pam pictorially as:
\begin{center}
\mfbox{  
  \begin{minipage}{0.4\textwidth}
    \includegraphics[width=\textwidth]{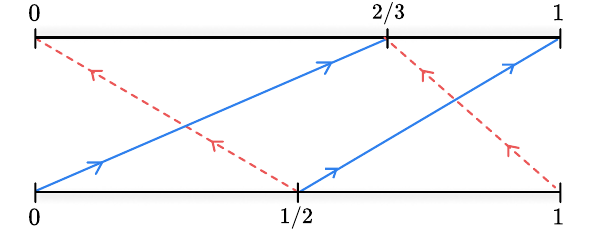}
  \end{minipage}
}
\end{center}
The picture has the following semantics: the two intervals are $[0,1/2)$ and $[1/2,1)$; the first one is mapped to $[2/3,1)$, and the second one is mapped to $[0,2/3)$. More precisely we have $f(0)=2/3$, $\lim_{x\to (1/2)^{-}}f(x)=1$, and for the second interval, $f(1/2)=0$, $\lim_{x\to 1^{-}}f(x)=2/3$. Note that this information is sufficient to specify the affine functions; the \pam depicted in the picture above is in fact exactly that given as an example in the introduction.

Denote the half-open unit interval by $\unit:=[0,1)$. Let $f$ be a \pam and $x_0\in\unit$. We denote the \textbf{orbit} of $x_0$ under $f$ by
\begin{align*}
  \orbit(f,x_0)\defeq \set{f^n(x_0)\st n\in\nat}. 
\end{align*}
There is nothing special about the unit interval $\unit$ in the definition of piecewise affine maps. Indeed, when given a \pam $f$ mapping some general interval $[a_1,a_2)$ to itself, we can reduce its reachability problem to that of a \pam from $\unit$ to $\unit$ (and \emph{vice-versa}). To see this, consider the bijection $h$ from $[a_1,a_2)$ to $\unit$ defined as:
\begin{align*}
  h(x)\defeq \frac {x-a_1} {a_2-a_1},
\end{align*} and define the function $g\st \unit\to\unit$, as 
\begin{align*}
  g\defeq h^{-1}\circ f\circ h. 
\end{align*}
Clearly $g^n=h^{-1}\circ f^n\circ h$, so $t\in [a_1,a_2)$ is reached from $x_0\in [a_1,a_2)$ by applying $f$, if and only if the same is true for $h(t)$ and $h(x_0)$ by applying $g$.

Now we identify a few cases of \pam where reachability is easy. For this we need the following definition.

Let $f$ be a \pam with intervals $I_1,\ldots, I_\ell$. The \textbf{interval reachability graph} of $f$ is a directed graph $G_f=(V,E)$, where the vertices are
  \begin{align*}
    V\defeq \set{0,1,\ldots,\ell},
  \end{align*}
  and there is an edge from vertex $k$ to vertex $j$ if and only if
  \begin{align*}
    \im(f\restrict{I_k})\cap I_j\ne\emptyset. 
  \end{align*}
  In other words, we put an edge from $k$ to $j$, if there is a point in interval $I_k$ that is mapped, via $f$, to a point in the interval $I_j$. When the interval reachability graph is particularly simple, we can decide reachability:
  \begin{lemma}
    \label{lem:easy}
    Let $f$ be a \pam and $G_f$ its interval reachability graph. Suppose moreover that $G_f$ has at least one of the following properties:
    \begin{enumerate}
    \item the only loops in $G_f$ are self-loops, 
    \item every vertex in $G_f$ has a unique outgoing edge. 
    \end{enumerate}
    Then there exists a procedure to decide reachability for $f$. 
  \end{lemma}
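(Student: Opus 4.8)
The plan is to handle the two cases separately, in each case using the structure of $G_f$ to reduce reachability to a finite search combined with (at most) an application of the decidability of the orbit problem for purely affine maps~\cite{kannan86_polyn_time_algor_orbit_probl}.

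\emph{Case 1: the only loops in $G_f$ are self-loops.} First I would observe that, since every cycle in $G_f$ is a self-loop, the quotient of $G_f$ by its strongly connected components is a DAG in which every SCC is a single vertex; equivalently, $G_f$ becomes acyclic once self-loops are deleted. Consequently any orbit $\orbit(f,x_0)$ visits the intervals $I_k$ according to a path in $G_f$, and such a path can change interval only finitely many times (at most $\ell$ times), since it can never return to an interval it has left. I would make this precise by defining the \emph{itinerary} of $x_0$ as the sequence of indices $k$ with $f^n(x_0)\in I_k$; this sequence is eventually constant, equal to some index $k^\ast$, and the prefix before it stabilises has length bounded by a quantity depending only on $f$ (not on $x_0$). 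To decide whether $t$ is reached, I would enumerate all possible itineraries (there are finitely many, each a simple path in $G_f$ possibly with a terminal self-loop); along a fixed itinerary the first several iterates are obtained by composing finitely many affine maps and can be computed explicitly and compared with $t$, while the tail is governed by a single affine map $x\mapsto a_{k^\ast}x+b_{k^\ast}$ iterated on $I_{k^\ast}$ — and reachability of $t$ under iteration of a one-dimensional affine map from a given point is trivially decidable (it is an instance of the orbit problem, but here one can simply solve $a_{k^\ast}^n y + b_{k^\ast}(1+a_{k^\ast}+\cdots+a_{k^\ast}^{n-1}) = t$ for $n\in\nat$, which reduces to checking whether a fixed point equals $t$ or whether finitely many candidate $n$ work, depending on whether $a_{k^\ast}=1$ or not). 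One must take care that the computed tail-entry point actually lies in $I_{k^\ast}$ and that the intermediate points lie in the prescribed intervals; these are finitely many rational inequalities to check.

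\emph{Case 2: every vertex of $G_f$ has a unique outgoing edge.} Here the itinerary of \emph{any} starting point is forced: from the vertex $k_0$ with $x_0\in I_{k_0}$ there is a unique successor $k_1$, then a unique $k_2$, and so on, so the sequence $k_0,k_1,k_2,\ldots$ is completely determined by $k_0$ and is eventually periodic (a "rho" shape: a finite tail leading into a cycle). Again this means $f^n(x_0)$ is, for $n$ in the tail, an explicit composition of a fixed finite list of affine maps applied to $x_0$, and for $n$ beyond the tail it is governed by iterating the fixed composition $g = f\restrict{I_{k_{p+q-1}}}\circ\cdots\circ f\restrict{I_{k_p}}$ corresponding to one loop around the cycle. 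Since $g$ is itself affine (a composition of affine maps), reachability of $t$ along the tail is a finite check, and reachability within the cyclic part reduces to asking, for each of the $q$ residues $r$ modulo the cycle length, whether the affine map $g$ reaches $f^r(\text{entry point})$'s relevant target preimage — more directly, whether $g^m(z) = t'$ has a solution $m\in\nat$ for the appropriate $z$ and $t'$, which is the one-dimensional orbit problem and hence decidable as above.

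\emph{Main obstacle.} None of the steps is deep; the real care is bookkeeping. In both cases the subtlety is that knowing $t\in I_j$ and $j$ is reachable in $G_f$ does \emph{not} imply $t$ is reached — $G_f$ only records that \emph{some} point of $I_k$ maps into $I_j$, not that the particular orbit point does. So the argument must track the actual (rational) orbit points symbolically along each of the finitely many admissible itineraries and verify the interval-membership constraints at each step, rather than reasoning purely at the level of the graph. The other point requiring a little attention is the degenerate affine cases ($a_k = 0$, giving a constant, or $a_k = 1$, giving a translation) when solving $g^m(z)=t'$, but each of these is elementary. I expect the write-up to consist mainly of setting up the itinerary notation cleanly so that both cases fall out uniformly.
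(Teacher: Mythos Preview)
Your overall strategy is sound, and your treatment of Case~2 is essentially the paper's argument. But Case~1 has a genuine gap.

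You assert that ``the prefix before [the itinerary] stabilises has length bounded by a quantity depending only on $f$ (not on $x_0$).'' This is false. Take $I_1=[0,\tfrac12)$, $I_2=[\tfrac12,1)$ with $f_1(x)=2x$ and $f_2(x)=(x+1)/2$. Then $G_f$ has edges $1\to 1$, $1\to 2$, $2\to 2$, so its only loops are self-loops. From $x_0=2^{-n}$ the orbit is $2^{-n},2^{-n+1},\ldots,\tfrac12$, taking $n-1$ steps before leaving $I_1$; this is unbounded in $x_0$. Your subsequent enumeration of ``finitely many itineraries, each a simple path in $G_f$ possibly with a terminal self-loop'' does not repair this: a simple path records only the \emph{order} in which intervals are visited, not the number of steps spent in each, and that dwell time can be arbitrarily large (or infinite, if the affine map on the current interval has an attracting fixed point inside it). So ``compute the first several iterates explicitly'' is not a terminating procedure as stated.

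The missing ingredient is exactly what the paper isolates before splitting into cases: for a single affine map $g(x)=ax+b$ one can decide not only (i) whether $g^n(x_0)=t$ for some $n$, but also (ii) whether $g^n(x_0)\in I$ for some $n$, for a given interval $I$---and if so, compute the least such $n$. Armed with (ii), Case~1 goes through cleanly: at each stage apply (ii) with $g=f_k$ and $I=\unit\setminus I_k$ to decide whether the orbit ever leaves the current interval $I_k$; if it never does, finish with (i); if it does, compute the exit point and repeat in the new interval. Since an interval, once left, is never revisited, this loop runs at most $\ell$ times. Replace the bounded-prefix claim with this argument and your proof is complete.
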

  \begin{proof}
    Let $g(x):=ax+b$ be an affine map, where $a,b\in\rat$. Note that if $a\ne 1$, then by a geometric-series argument, for all $n\in\nat$,
    \begin{align*}
      g^n(x)=a^nx+\frac{a^n-1}{a-1}b. 
    \end{align*}
    As a consequence, given $x_0,t\in\rel$ and an interval $I\subset\rel$, we can decide the following two questions by looking at the prime decompositions of the relevant numbers:
    \begin{enumerate}[(i)]
      \item Does there exist $n\in\nat$ such that $g^n(x_0)=t$?
      \item Does there exist $n\in\nat$ such that $g^n(x_0)\in I$?
    \end{enumerate}
    
    Let now $f$ be a \pam, $x_0,t\in \unit$ the initial and target points respectively, and $G_f$ its interval reachability graph. Suppose that the only loops in $G_f$ are self-loops. This means that once the trajectory of $x_0$ leaves some interval it will never go back to it again. Let $I_1,\ldots, I_\ell$ be the intervals of $f$, and $f_1,\ldots, f_\ell$ the corresponding affine maps. Suppose that $x_0\in I_k$, for some $k$. Using (ii), for the map $f_k$, we can decide whether the trajectory of $x_0$ always stays in $I_k$ or whether it leaves and reaches another interval~$I_{k'}$. If it stays forever in $I_k$, we decide reachability with (i), if it reaches $I_{k'}$, then we repeat this process, and hence identify an interval, say $I_j$, that contains the tail of the trajectory, \ie all but finitely many members of $\orbit(f,x_0)$ belong to $I_j$. Now to decide whether $t$ is reachable it suffices to check if it appears in the trajectory before $I_j$ is reached and if not, to use (i) for the affine map~$f_j$.

    Suppose now that every vertex in $G_f$ has a unique outgoing edge. This implies that there exists a loop of length $p$ in $G_f$ of the form:
    \begin{align*}
      k=k_0\to k_1 \to \cdots \to k_{p-1} \to k,
    \end{align*}
    and that 
    \begin{align*}
      f^p(I_{k_i})\subset I_{k_i},
    \end{align*}
    for all $i\in\set{0,1,\ldots, p-1}$. The function $f^p$ is itself a \pam, and since from every interval of $f$ we can go to a unique successor interval, we have that the intervals of $f^p$ coincide with those of $f$. In particular $I_{k_0}, I_{k_1},\ldots, I_{k_{p-1}}$ are intervals of $f^p$; let $g_{k_0},\ldots, g_{k_{p-1}}$ be their corresponding affine maps. To decide whether $t$ is reachable, first check whether it belongs to one of the intervals $I_{k_0},\ldots, I_{k_{p-1}}$, if it does not, then $t$ is not reachable, if it does belong to, say, the interval $I_j$, then the problem is reduced to the question of whether $t$ is reachable from $f^j(x_0)\in I_j$, using the affine map $g_j$, for which we can use (i). 
  \end{proof}

  In the rest of this section, for the proof of \Cref{th:injective}, we assume that $f$ is an \emph{injective} \pam with two intervals:
  \begin{align*}
    I_1\defeq [0,c),\qquad\qquad I_2\defeq [c,1),
  \end{align*}
  for some cutpoint $c\in\rat$, $0<c<1$, and
  \begin{align*}
    f_1(x)\defeq a_1x+b_1,\qquad\qquad f_2(x)\defeq a_2x+b_2,
  \end{align*}
  the corresponding affine maps. By injectivity, $f(I_1)$ is disjoint from $f(I_2)$. If $f(I_1)< f(I_2)$, then it is not difficult to see that $G_f$ cannot have any loops that are not self-loops. In this case, we can decide reachability with \Cref{lem:easy}. So then, let us assume that:
  \begin{align}
    \label{eq:twist}
    f(I_2)<f(I_1). 
  \end{align}
  \subsection{Negative Slopes}
  \label{subsec: negative slopes}
  The next step towards the proof of \Cref{th:injective} is to treat the case when at least one of the slopes is negative, \ie at least one of the $a_1,a_2$, defined above, is negative. We proceed with the following two lemmas:
  \begin{lemma}
    \label{lem:neg1}
    Suppose that $a_2<0$, and furthermore $c\in f_2(I_2)$. Then we can decide reachability for~$f$. 
  \end{lemma}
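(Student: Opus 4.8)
The plan is to pass to the first-return map of $f$ to $I_2$ and to show that this map lands in the easy case of \Cref{lem:easy}. First I would extract the geometric content of the hypotheses. Since $a_2<0$, the affine map $f_2$ is decreasing on $I_2=[c,1)$, so $\im(f_2\restrict{I_2})$ is an interval whose maximum is $f_2(c)$, attained at $x=c$; the assumption $c\in f_2(I_2)$ then forces $f_2(c)\ge c$. Together with the twist assumption~\eqref{eq:twist}, which places $\im(f\restrict{I_1})$ entirely above and disjoint from $\im(f\restrict{I_2})$, this yields
\begin{align*}
  \im(f\restrict{I_1})\subseteq(f_2(c),1)\subseteq I_2,
\end{align*}
so $f$ sends every point of $I_1$ directly back into $I_2$. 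Put $d\defeq f_2^{-1}(c)$; since $f_2$ is decreasing and $c\in f_2(I_2)=(a_2+b_2,\,a_2c+b_2]$, one has $d\in[c,1)$, and I treat the degenerate case $d=c$ (\ie $f_2(c)=c$, making $c$ a fixed point and collapsing the dynamics on $I_2$ to one affine map after a single step, hence trivially decidable) separately, assuming henceforth $d>c$. Split $I_2$ as $A\defeq[c,d]$ and $B\defeq(d,1)$, so that $A=\set{x\in I_2\st f_2(x)\ge c}$.

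Next I would introduce $\phi\st I_2\to I_2$ defined by $\phi\defeq f_2$ on $A$ and $\phi\defeq f_1\circ f_2$ on $B$. By the first paragraph this is exactly the first-return map of $f$ to $I_2$: from $A$ the orbit re-enters $I_2$ in one step, and from $B$ it spends exactly one step in $I_1$ and then returns. Hence, for $x_0\in I_2$,
\begin{align*}
  \orbit(f,x_0)\cap I_2=\set{\phi^n(x_0)\st n\in\nat},\qquad \orbit(f,x_0)\cap I_1=\set{f_2(\phi^n(x_0))\st n\in\nat,\ \phi^n(x_0)\in B},
\end{align*}
while for $x_0\in I_1$ the orbit is $\set{x_0}$ together with $\orbit(f,f(x_0))$, and $f(x_0)\in I_2$. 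The map $\phi$ is a piecewise affine map with two intervals and rational data; it is injective, since on each of $A,B$ it is a composition of injective affine maps and $\im(\phi\restrict A)\cap\im(\phi\restrict B)=\emptyset$ by the next paragraph (the single-point indeterminacy at $d$, where $f_2(d)=c\in I_2$ so $d$ genuinely behaves like a point of $A$, is harmless). After rescaling $[c,1)$ onto $\unit$ via the affine conjugacy of \Cref{sec:reachability}, $\phi$ is a \pam in the sense of this paper.

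The crux is that $\phi$ is \emph{not} twisted. On the one hand $\im(\phi\restrict A)=f_2([c,d])=[c,f_2(c)]$; on the other $\im(\phi\restrict B)=f_1\bigl(f_2((d,1))\bigr)\subseteq\im(f\restrict{I_1})\subseteq(f_2(c),1)$ by the first paragraph. So the image of the left interval $A$ lies strictly below the image of the right interval $B$, and the elementary observation recorded just before \Cref{subsec: negative slopes} (applied to $\phi$) shows that the interval reachability graph $G_\phi$ has no loops other than self-loops. Thus \Cref{lem:easy} decides reachability for $\phi$. It remains to translate a query for $f$: given $x_0,t\in\unit$, reduce to the case $x_0\in I_2$ (if $x_0\in I_1$, separately test $t=x_0$ and replace $x_0$ by $f(x_0)\in I_2$); if $t\in I_2$ then $t$ is reachable from $x_0$ iff $t\in\orbit(\phi,x_0)$; and if $t\in I_1$ then $t$ is reachable iff $t\in f_2(B)=(a_2+b_2,c)$ and the unique point $f_2^{-1}(t)\in B$ lies in $\orbit(\phi,x_0)$. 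Each of these is decided by the preceding step.

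I expect the only genuine difficulty to be the structural identification in the third paragraph — that the return map $\phi$, despite carrying a negative slope on $A$, always falls into the non-twisted configuration — together with the estimate $\im(f\restrict{I_1})\subseteq(f_2(c),1)$ that drives it. The remaining ingredients (well-definedness and injectivity of $\phi$, the orbit decomposition, the rescaling, and the careful handling of the half-open boundaries and of the point $d$) are routine bookkeeping.
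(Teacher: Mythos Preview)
Your proof is correct and follows essentially the same approach as the paper: pass to the first-return map on $I_2$ (the paper's $g$, your $\phi$), observe that its two images satisfy $\phi(A)<\phi(B)$ so that $G_\phi$ has only self-loops, and conclude via \Cref{lem:easy}. Your treatment is in fact slightly more careful than the paper's at the boundary point $d=f_2^{-1}(c)$ (placing it in $A$ so that $\phi(d)=f_2(d)=c$ matches the true first return), and your reduction of the $t\in I_1$ case to a $\phi$-reachability query for $f_2^{-1}(t)$ is more explicit, but these are presentational differences only.
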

  \begin{proof}
    If $c$ is in the endpoint of $f_2(I_2)$, \ie if $f_2(c)=c$, then except for this fixed point, which can be handled separately, the interval reachability graph of $f$, $G_f$, in this case looks like:
    \begin{align*}
      1\rightleftarrows 2
    \end{align*}
    and reachability can be decided by applying the second part of \Cref{lem:easy}. So assume that $c$ is in the interior of $f_2(I_2)$, and set
    \begin{align*}
      c'\defeq f_2^{-1}(c).
    \end{align*}
    \begin{center}
      \mfbox{  
        \begin{minipage}{0.4\textwidth}
          \includegraphics[width=\textwidth]{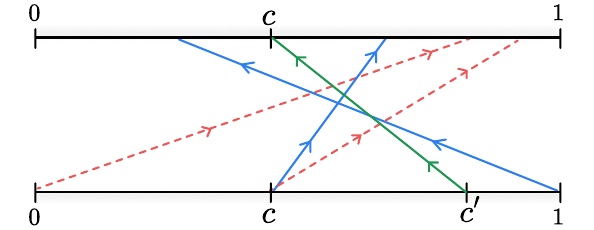}
        \end{minipage}
      }
    \end{center}    
    We define the following \pam:
    \begin{align*}
      g(x)\defeq\begin{cases}
                  f_2(x) & \text{ if }x\in [c,c')\\
                  f_1(f_2(x)) & \text{ if }x\in [c',1).
                \end{cases}
    \end{align*}
    Observe that $g$ is a \pam from $I_2=[c,1)$ to itself. Indeed, since $f_2$ has a negative slope, $a_2<0$, we have:
    \begin{align*}
      f_2([c,c'))&>c, \text{and}\\
      f_2([c',1))&\le c.
    \end{align*}
    And since we assumed \eqref{eq:twist}, it follows that
    \begin{align}
      \label{eq:to the right}
      f_1\bigg(f_2\big([c',1)\big)\bigg)\ge f_2\big([c,c']\big)>c,
    \end{align}
    so both components of $g$ are mapping to $I_2$.

    The idea of this new \pam $g$ is that reachability questions of $f$ can be reduced to those of $g$: Indeed if both $x_0$ and $t$ belong to the second interval $I_2$, then by construction of the map $g$, the point $t$ can be reached from $x_0$ by applying $f$ if and only if, it can be reached by applying $g$. If $x_0$ does not belong to $I_2$, then by injectivity of $f$, we can simply consider whether $t$ is reached from $f(x_0)$ which is guaranteed to be in $I_2$. The same holds when $t\notin I_2$. 

    Finally we show that reachability for the map $g$ is easy to decide. To see this, observe that \eqref{eq:to the right} implies that $G_g$ can have one of the two following forms:
    \begin{align*}
      \lcirclearrowright 1 \to 2 \rcirclearrowleft\qquad\qquad or \qquad\qquad \lcirclearrowright 1 \leftarrow 2 \rcirclearrowleft,
    \end{align*}
    in either case the only loops are self-loops, so we can decide reachability by appealing to \Cref{lem:easy}. 
  \end{proof}
  The proof of the next lemma is similar to the one above, except that the process of identifying the simplified \pam $g$ is repeated a (finite) number of times.
  \begin{lemma}
    \label{lem:neg2}
    Suppose that  $a_2<0<a_1$, and furthermore $c\in f_1(I_1)$. Then we can decide reachability~for~$f$. 
  \end{lemma}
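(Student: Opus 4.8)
The plan is to iterate the reduction idea of Lemma~\ref{lem:neg1}, building a sequence of simplified \pam on shrinking subintervals until the interval reachability graph becomes tractable. Recall the standing assumptions: $f$ is injective with two intervals $I_1=[0,c)$, $I_2=[c,1)$, the twist condition \eqref{eq:twist} holds, $a_2<0<a_1$, and $c\in f_1(I_1)$. Since $a_1>0$ and $c\in f_1(I_1)$, the preimage $c_1\defeq f_1^{-1}(c)$ lies in the interior of $I_1$ (the boundary case $f_1(c_1)=c$ with $c_1$ an endpoint is again handled by \Cref{lem:easy}, as in the previous lemma). The twist condition tells us $f(I_2)<f(I_1)$, so $f_2(I_2)$ sits to the left; the only way the orbit can return to a previously-left region is through the ``upper'' piece $[c_1,c)$ of $I_1$, which $f_1$ maps into $I_2$. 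This suggests defining, exactly as before, a new map $g$ on $I_1=[0,c)$ by
\begin{align*}
  g(x)\defeq\begin{cases}
              f_1(x) & \text{ if }x\in [0,c_1),\\
              f_2(f_1(x)) & \text{ if }x\in [c_1,c),
            \end{cases}
\end{align*}
and checking that $g$ maps $I_1$ into itself: the first piece maps $[0,c_1)$ into $[b_1,c)\subseteq I_1$, and the second maps $[c_1,c)$ via $f_1$ into $[c,f_1(c^-))\subseteq I_2$ and then via $f_2$ (negative slope) back down below $c$, with the twist condition ensuring it lands in $I_1$ rather than overshooting below $0$. As in Lemma~\ref{lem:neg1}, reachability for $f$ reduces to reachability for $g$: if $x_0,t\in I_1$ the orbits correspond step-for-step, and if either lies in $I_2$ we first apply $f$ (using injectivity) to push it into $I_1$, or track it until it enters $I_1$.

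The key difference from Lemma~\ref{lem:neg1} is that $g$ is itself a \pam on $[0,c)$ whose second affine component $f_2\circ f_1$ has slope $a_1a_2<0$, and whose cutpoint is $c_1$; so we are again in a situation with two intervals, a negative slope on the right piece, and (after rescaling $[0,c)$ to $\unit$ via the bijection $h$ described earlier in the section) potentially the same structural hypotheses. The strategy is therefore to argue that this process \emph{terminates}: each iteration either produces a map whose interval reachability graph has only self-loops (so \Cref{lem:easy} applies and we stop), or falls under Lemma~\ref{lem:neg1} (because the relevant cutpoint preimage now lies in the image of the negative-slope piece), or else strictly simplifies some monotone quantity. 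The natural candidate for that quantity is the length (or the rational complexity, measured via denominators) of the active interval: $[0,c_1)\subsetneq[0,c)$ with $c_1<c$, so the working interval shrinks; but one must be careful, since after rescaling the ``new $c$'' could be anything. A cleaner termination argument tracks the interval reachability graph of $f$ itself: we only recurse when the orbit is forced through the upper piece $[c_1,c)$ and back, and each such reduction corresponds to peeling off one ``return'' through $I_1$; because slopes compose multiplicatively, after finitely many steps the composed slope has absolute value bounded away from $1$ in a way that forces the graph to degenerate—alternatively, one shows directly that after at most a bounded number of iterations the hypothesis $c\in f_1(I_1)$ (in the rescaled coordinates) must fail, at which point either the twist condition fails or we are in the easy case.

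The main obstacle I anticipate is making the termination argument rigorous and effective: unlike Lemma~\ref{lem:neg1}, where a single surgery suffices, here we must bound the number of iterations and verify that each intermediate map genuinely satisfies the hypotheses needed to continue (injectivity is preserved since compositions of injective affine pieces on disjoint domains stay injective, but the twist condition and the ``cutpoint in image'' condition need to be re-examined at each stage, and in the degenerate cases one must confirm \Cref{lem:easy} really applies). I would handle this by identifying the right potential function—most likely the position of the cutpoint relative to the fixed point of the right-hand affine map, or the index of $x_0$'s interval in a well-founded ordering on the finitely many sub-intervals generated—and showing it strictly decreases, so that the recursion bottoms out in one of the base cases covered by \Cref{lem:easy} or \Cref{lem:neg1}, each of which is already known to be decidable.
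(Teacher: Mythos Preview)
Your construction of $g$ on $[0,c)$ and the reduction of $f$-reachability to $g$-reachability are correct and match the paper's approach exactly. The genuine gap is termination: none of the potential functions you propose actually works. In particular, the slope of the first affine piece does \emph{not} change under iteration---at every stage the left branch of the new map is literally $f_1$ restricted to a smaller domain---so the ``slopes compose multiplicatively and eventually force degeneracy'' idea does not apply to the piece that matters. And rescaling $[0,c_k)$ back to $\unit$ at each step, as you suggest, destroys precisely the invariant that makes termination work.

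The paper's argument is to \emph{not} rescale. Keep $g_k$ as a \pam on $[0,c_{k-1})$ with left branch equal to $f_1\restrict{[0,c_k)}$, where $c_k\defeq f_1^{-1}(c_{k-1})=f_1^{-k}(c)$. Two things are then invariant across all $k$: the left-branch map is always $f_1$, and hence
\[
m_k\defeq\min g_{k,1}(J_{k,1})=f_1(0)=m_0\qquad\text{for every }k.
\]
Termination now follows from a one-line observation: the hypothesis $c\in f_1([0,c))$ forces $f_1(c)>c$, and together with $f_1(0)>0$ (a consequence of the twist condition~\eqref{eq:twist}) this means $f_1(x)>x$ throughout $[0,c)$; hence the sequence $c_k=f_1^{-k}(c)$ is strictly decreasing and must eventually drop below the fixed threshold $m_0>0$. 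At that index $K$ the cutpoint $c_K$ lies outside the image of the left branch (which always starts at $m_0$), so the recurring hypothesis ``cutpoint in image of the first piece'' fails, and $g_K$ is handled either by \Cref{lem:easy} (if $c_K$ is also outside the image of the negative-slope piece) or by \Cref{lem:neg1} (if it lies inside it). That is the concrete, effective termination bound you were looking for; the right potential function is simply $c_k$ itself, measured against the invariant floor $m_0$.
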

  \begin{proof}
    Define the following two quantities:
    \begin{align*}
      c_1\defeq f_1^{-1}(c),\qquad\qquad m_0\defeq \min f_1(I_1),
    \end{align*}
    and consider the piecewise affine map $g_1$, defined as:
    \begin{align*}
      g_1(x)\defeq\begin{cases}
                    g_{1,1}(x):=f_1(x) &\text{ if }x\in[0,c_1),\\
                    g_{1,2}(x):=f_2(f_1(x)) &\text{ if }x\in[c_1,c). 
                  \end{cases}
    \end{align*}
    Using Assumption~\eqref{eq:twist}, we see that $g_1$ is a \pam from the interval $[0,c)$ to itself, with intervals $J_{1,1}:=[0,c_1)$ and $J_{1,2}:=[c_1,c)$. As in the proof of the preceding lemma,  if we can decide reachability for $g_1$, then we can do the same for $f$. To see this, note that if $x_0,t\in [0,c)$, then we clearly have:
    \begin{align*}
      \exists n\ f^n(x_0)=t\qquad\Leftrightarrow\qquad\exists m\ g_1^m(x_0)=t
    \end{align*}
    If $t$ does not belong to $[0,c)$ then by injectivity of $f$ we can simply consider the problem of reaching $f_2(t)$, starting from $f_2(x_0)$ or $x_0$, depending on whether $x_0\in[0,c)$ or not.  

   The new \pam $g_1$ inherits most properties from $f$; indeed, the slope of $g_{1,1}$ is positive, while that of $g_{1,2}$ is negative, the cutpoint of $g_1$ is $c_1$, and crucially
    \begin{align*}
      m_0=m_1\defeq\min g_{1,1}(J_{1,1}),
    \end{align*}
    due to $f_1$ having a positive slope.
    The only property it might not inherit is if $c_1$ does not belong to $g_{1,1}(J_{1,1})$. If it does not belong to $g_{1,2}(J_{1,2})$ either, then the interval reachability graph of $g_1$, $G_{g_1}$ has the form:
    \begin{align*}
      1\rightleftarrows 2
    \end{align*}
    and reachability in $g_1$ (and hence also in $f$) can be decided in this case due to the second part of \Cref{lem:easy}. If $c_1\in g_{1,2}(J_{1,2})$, then $g_1$ satisfies all the conditions of \Cref{lem:neg1}, as a consequence of which, we can again decide reachability in $g_1$.

    So suppose that $g_1$ also inherits the property that the cutpoint $c_1$ falls in the image of the first affine map $g_{1,1}$, \ie $c_1\in g_{1,1}(J_{1,1})$. Now we iterate the process, and define $c_2:=g_{1,1}^{-1}(c_1)$ and the new \pam $g_2$ as
    \begin{align*}
      g_2(x)\defeq \begin{cases}
        g_{2,1}(x):=g_{1,1}(x) &\text{ if }x\in [0,c_2),\\
        g_{2,2}(x):=g_{1,2}\big(g_{1,1}(x) \big) &\text{ if }x\in [c_2,c_1).
        \end{cases}
      \end{align*}
      which maps $[0,c_1)$ to itself. If again $c_2\in g_{2,2}(J_{2,2})$, we define $g_3$ and so on. We summarise the relevant properties of the sequence $f,g_1,g_2,\ldots$ of \pam. For all $k\in\nat$ we have:
      \begin{align}
        \label{eq:prop1}
        &g_{k,1}=g_{k-1,1}=\cdots =g_{1,1}=f_1,\\
        \label{eq:prop2}
        &c_k=f_1^{-1}(c_{k-1})=f^{-k}(c),\\
        \label{eq:prop3}
        &m_0=\cdots =m_k\defeq \min g_{k,1}(J_{k,1}).
      \end{align}
      Property \eqref{eq:prop1} is by definition, \eqref{eq:prop2} follows from \eqref{eq:prop1} and the fact that in the sequence $g_1,g_2,\ldots$ of \pam we have assumed that the cutpoint belongs to the image of the first map. Property \eqref{eq:prop3} follows from \eqref{eq:prop1} and the fact that this minimum is reached at $0$. 

      We claim that this iterative process halts, \ie there exists $K\in\nat$ such that $c_K$ does not belong to the image of the first map $g_{K,1}$,
      \begin{align*}
        c_K\notin g_{K,1}(J_{K,1}). 
      \end{align*}
      Indeed as a consequence of \eqref{eq:prop3} we have:
      \begin{align}
        \label{eq:gk bounded}
        g_{k,1}(J_{k,1})\ge m_0. 
      \end{align}
      Since the function $f_1$ is monotone increasing in $[0,c)$ and $f_1(0)>0$, $f_1(c)>c$, it follows that for any $x\in f_1([0,c))$ there is some $N_x\in\nat$ such that
      \begin{align*}
        f_1^{-N_x}(x) < f_1(0)=m_0.
      \end{align*}
      Due to \eqref{eq:prop2}, this means that there is some $K\in\nat$, such that $c_K<m_0$, and from \eqref{eq:gk bounded}, $c_K$ does not belong to the image of $g_{K,1}$, \ie $c_K\notin g_{K,1}(J_{K,1})$. The claim is proved.

      As before, deciding reachability in $g_K$ is easy, since either the interval reachability graph $G_{g_K}$ is of the form that allows us to apply \Cref{lem:easy}, or the conditions of \Cref{lem:neg1} are fulfilled. And if we can decide reachability in $g_K$, then we can do the same in $g_{K-1}$, and so on, for all the maps in the sequence $f,g_1,g_2,\ldots, g_K$. In particular this means that we can decide reachability in $f$. 
    \end{proof}

    The two lemmas above, in fact, cover all the cases where at least one of the slopes is negative. This claim can be proved as follows. Suppose that both $f_1$ and $f_2$ have negative slopes. If the cutpoint $c$ is not in the image of either $f_1$ or $f_2$ (\ie $c\notin f_1(I_1)$, and $c\notin f_2(I_2)$) then in the interval reachability graph every vertex has a unique outgoing edge, hence we can use \Cref{lem:easy}. If $c$ belongs to the image of $f_2$ then we use \Cref{lem:neg1}. If $c$ belongs to the image of $f_1$ on the other hand, we apply the bijection from $[0,1)$ to $(0,1]$, given by the map $h(x)=1-x$, to define a new \pam $f':=h^{-1}\circ f \circ h$, whose cutpoint belongs to the image of the second map $f'_2$, and hence \Cref{lem:neg1} is applicable.

    Similarly, if $f_1$ has a negative slope, but $f_2$ a positive one, we turn things around with the function $1-x$ and apply the two lemmas above. We have proved:
    \begin{proposition}
      The reachability problem is decidable for injective \pam with two intervals, such that at least one of the affine maps has a negative slope. 
    \end{proposition}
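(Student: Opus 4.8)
The plan is to settle the Proposition by a short case analysis on the signs of the two slopes $a_1,a_2$, arranging each configuration so that one of \Cref{lem:easy}, \Cref{lem:neg1}, \Cref{lem:neg2} applies; the only new ingredient beyond those lemmas is a single reflection symmetry. Throughout I would retain the standing assumption \eqref{eq:twist}, $f(I_2)<f(I_1)$, since if instead $f(I_1)<f(I_2)$ then $G_f$ has only self-loops and \Cref{lem:easy} already decides reachability. Note that injectivity forces both slopes to be nonzero, so ``at least one negative slope'' splits cleanly into ``both slopes negative'' and ``exactly one slope negative''. The symmetry I would use is the order-reversing involution $h(x)\defeq 1-x$, regarded (as in the discussion above) as a bijection from $[0,1)$ to $(0,1]$, with the single-point endpoint mismatch checked in isolation; conjugating $f$ by $h$ gives a \pam $f'\defeq h^{-1}\circ f\circ h$ whose affine pieces are again rational (a piece $x\mapsto ax+b$ becomes $x\mapsto ax+(1-a-b)$) and whose two intervals are the mirror images of those of $f$ with their roles swapped. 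In particular $h$ transposes $a_1$ and $a_2$, preserves \eqref{eq:twist}, sends the cutpoint $c$ to $1-c$, and makes reachability for $f$ and for $f'$ interreducible.

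With this in hand, I would argue as follows. Since $f$ is injective, $f(I_1)$ and $f(I_2)$ are disjoint, so the cutpoint $c$ lies in at most one of $f_1(I_1),f_2(I_2)$. If $c$ lies in neither, then each affine component of $f$ maps into a single one of the two intervals, so every vertex of $G_f$ has a unique outgoing edge and the second part of \Cref{lem:easy} applies, whatever the signs of the slopes. So suppose $c$ lies in exactly one image. \emph{Both slopes negative:} if $c\in f_2(I_2)$, apply \Cref{lem:neg1} directly; if $c\in f_1(I_1)$, first pass to $f'$, so that the swap of intervals places the new cutpoint in the image of $f'_2$ (and $a'_2=a_1<0$), and then apply \Cref{lem:neg1}. \emph{Exactly one slope negative:} conjugating by $h$ when the negative slope sits on the first interval, I may assume $a_2<0<a_1$; then \Cref{lem:neg1} covers the sub-case $c\in f_2(I_2)$, and \Cref{lem:neg2}---whose hypothesis is exactly $a_2<0<a_1$ together with $c\in f_1(I_1)$---covers the sub-case $c\in f_1(I_1)$. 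This exhausts all cases.

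I do not expect a real obstacle here, since all the substantive analysis is already contained in \Cref{lem:neg1} and \Cref{lem:neg2}; the residual work is purely organisational. The two points I would be careful about are the bookkeeping ones: confirming that conjugation by $h$ genuinely preserves the sign pattern (up to transposing the two coordinates) and the twist condition \eqref{eq:twist}, so that the hypotheses of the three lemmas are legitimately available after the reduction; and confirming that the $[0,1)$ versus $(0,1]$ discrepancy introduced by $h$ affects only one point of the orbit, which can be tested separately before invoking the conjugated map.
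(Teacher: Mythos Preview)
Your proposal is correct and follows essentially the same approach as the paper: a case split on the signs of the slopes, using the reflection $h(x)=1-x$ to reduce to configurations where \Cref{lem:neg1} or \Cref{lem:neg2} applies, with \Cref{lem:easy} handling the case when $c$ lies in neither image. You are, if anything, more explicit than the paper about the bookkeeping (that injectivity forces $c$ into at most one image, that conjugation by $h$ preserves \eqref{eq:twist}, and the half-open endpoint mismatch), which the paper leaves implicit.
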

    Having dealt with this case, in the rest of the current section we assume that both slopes are positive. We split the proof into the two cases, depending on whether $f$ is surjective or not. 
    \subsection{Bijections}
    \label{subsec:bij}
    In \cite{boshernitzan93_dense_orbit_ration}, Boshernitzan considers the question of whether there exists a \pam in whose definition only rational numbers figure, that also has orbits which are dense in $\unit$? The question in answered positively by exhibiting a family of such examples, that are shown to be topologically similar to rotations of $\rel/\intg$ by an angle that is not a rational multiple of~$\pi$. In this subsection, by utilising Boshernitzan's approach, we demonstrate that it not only helps with investigating topological properties of orbits, but it also aids us to decide reachability.

    We are given the injective \pam $f$ with intervals $I_1,I_2$, such that $f(I_2)<f(I_1)$, \ie \eqref{eq:twist}, and furthermore, thanks to the previous subsection, we assume that the slopes of the two affine components $f_1,f_2$ are both positive. In this subsection, let us further suppose that $f$ is surjective. Such maps are characterised by exactly two rational numbers in $(0,1)$; indeed they map some interval $[0,c)$ to some interval $[d,1)$, and they map $[c,1)$ to $[0,d)$, so they are characterised by the rationals $c,d \in (0,1)$. They look as follows:
    \begin{center}
      \mfbox{  
        \begin{minipage}{0.4\textwidth}
          \includegraphics[width=\textwidth]{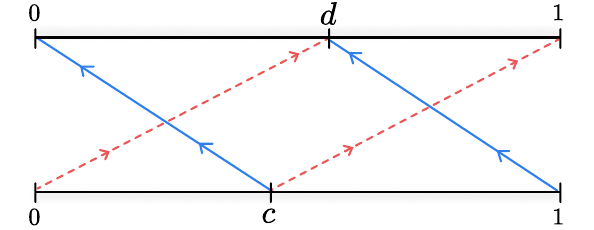}
        \end{minipage}
      }
    \end{center}    
    Explicitly, they can be defined as:
    \begin{align*}
      f(x)\defeq\begin{cases}
        \frac{1-d} c x + d &\text{ if }x\in [0,c),\\
        \frac d {1-c} x - \frac{cd}{1-c} &\text{ if }x\in [c,1),
      \end{cases}
    \end{align*}
    for any rationals $c,d$ in $(0,1)$.

    For any $\tau>0$, denote by $R_\tau$ the \textbf{rotation} by $\tau$ in $\rel/\intg$, that is:
    \begin{align}
      \label{eq:rotation}
      R_\tau(x)\defeq x+\tau\ \mathrm{mod}\ \intg,
    \end{align}
    where by the expression on  the right-hand side we mean: add $\tau$ to $x$ and then take the fractional part of the result. 
    
    To begin with, consider a \pam as defined above, where $c+d=1$. We claim that $f=R_d$. Indeed, replacing $1-d$ by $c$ and $1-c$ by $d$ in the definition of $f$ we see that the first affine component is $x+d$ while the second one is $x-c=x+d-1$, therefore $f=R_d$. So reachability from $x_0$ to $t$, in this case, is asking whether there exists some $n$ such that
    \begin{align*}
      x_0+nd\equiv t\mod\intg,
    \end{align*}
    or equivalently whether there are positive integers $n,m$ such that
    \begin{align*}
      x_0+nd=t+m,
    \end{align*}
    which is easily seen to be decidable.

    If $c+d>1$, we define a new \pam by applying the bijection $h(x)=1-x$, so that everything is rotated around, and in particular for the corresponding rationals $c',d'$ we have $c'+d'<1$. So assume that $c+d<1$, and define the quantity:
    \begin{align}
      \label{eq:def alpha}
      \alpha\defeq \frac {1-c-d} {cd}>0,
    \end{align}
    and the function $h$,
    \begin{align}
      \label{eq:def h}
      h(x)\defeq \frac{\log(\alpha x+1)}{\log(\alpha+1)}. 
    \end{align}
    Since $\alpha>0$, $h(x)$ is well-defined for any real $x\ge 0$. We prove that $h$ is a continuous bijection from $\unit$ to itself, that has an inverse that is itself continuous, in other words $h$ is a homeomorphism. Indeed we see that $h(0)=0$ and $h(1)=1$, and since the derivative
    \begin{align*}
      h'(x)=\frac \alpha {(\alpha x+1)\log(\alpha+1)}
    \end{align*}
    is positive in $[0,1]$, due to the fact that $\alpha>0$, we conclude that $h$ is a bijection, and clearly both $h$ and its inverse are continuous. Define now the angle $\tau$ as:
    \begin{align*}
      \tau\defeq h(d) = \frac{\log(\alpha d+1)}{\log (\alpha+1)}. 
    \end{align*}
    We show that $f$ is \emph{topologically similar} to a rotation by $\tau$, that is:
    \begin{lemma}
      \label{lem:similar to a rotation}
      $f = h^{-1} \circ R_\tau \circ h.$
  \end{lemma}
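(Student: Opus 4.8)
The plan is to verify the identity directly, branch by branch. Since $h$ has already been shown to be a homeomorphism of $\unit$, the asserted equality $f = h^{-1}\circ R_\tau\circ h$ is equivalent to $h\circ f = R_\tau\circ h$ as maps $\unit\to\unit$, and since $f$ is affine on each of $[0,c)$ and $[c,1)$ it suffices to check this equality separately on those two intervals.

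The first step is to record a few algebraic simplifications that follow purely from the choice $\alpha = (1-c-d)/(cd)$ in \eqref{eq:def alpha}: the factorisation $1-c-d+cd = (1-c)(1-d)$, which gives $\alpha+1 = (1-c)(1-d)/(cd)$, together with $\alpha d + 1 = (1-d)/c$ and $\alpha c + 1 = (1-c)/d$. With these in hand, a short computation on $[0,c)$, where $f(x) = \tfrac{1-d}{c}x+d$, shows
\begin{align*}
  \alpha f(x)+1 = (\alpha x+1)(\alpha d+1),
\end{align*}
while on $[c,1)$, where $f(x) = \tfrac{d}{1-c}(x-c)$, one gets
\begin{align*}
  \alpha f(x)+1 = \frac{(\alpha x+1)(\alpha d+1)}{\alpha+1};
\end{align*}
in both cases, after clearing denominators, the claimed identity collapses to precisely the defining relation for $\alpha$. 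Taking logarithms and dividing by $\log(\alpha+1)$ then yields, as exact identities between real numbers, $h(f(x)) = h(x) + h(d) = h(x) + \tau$ for $x\in[0,c)$ and $h(f(x)) = h(x) + \tau - 1$ for $x\in[c,1)$.

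It remains to see that these two ``affine-in-$h$'' relations assemble into the single rotation $R_\tau$, i.e.\ that the modular reduction in \eqref{eq:rotation} subtracts no integer on the first branch and exactly $1$ on the second. This is where one uses the surjectivity normal form: $f$ maps $[0,c)$ onto $[d,1)$ and $[c,1)$ onto $[0,d)$, and since $h$ is increasing with $h(0)=0$, $h(1)=1$ and $h(d)=\tau$, this gives $h(f(x))\in[\tau,1)$ on the first branch and $h(f(x))\in[0,\tau)$ on the second. Hence $h(x)+\tau\in[\tau,1)\subset[0,1)$ for $x\in[0,c)$ and $h(x)+\tau\in[1,1+\tau)$ for $x\in[c,1)$, so in both cases $R_\tau(h(x))$ equals exactly $h(f(x))$, as required; note in passing that $\tau = h(d)\in(0,1)$, so $R_\tau$ is a genuine rotation. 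I do not expect a serious obstacle here: the mathematical content is the algebraic identity, which is forced by the definition of $\alpha$, and the only point needing care is the bookkeeping of the integer shift between the two branches, which is settled immediately by the image intervals $[d,1)$ and $[0,d)$.
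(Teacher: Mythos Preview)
Your proof is correct and follows essentially the same approach as the paper: a direct branch-by-branch verification of the conjugacy identity, resting on the same algebraic facts (notably $(\alpha c+1)(\alpha d+1)=\alpha+1$, which you record via $\alpha d+1=(1-d)/c$ and $\alpha c+1=(1-c)/d$). The only cosmetic differences are that you verify $h\circ f = R_\tau\circ h$ rather than computing $h^{-1}\circ R_\tau\circ h$ explicitly, and you handle the $\mathrm{mod}\ \intg$ bookkeeping via the image intervals $[d,1)$, $[0,d)$ of $f$ rather than via the paper's direct inequality $(\alpha x+1)(\alpha d+1)<\alpha+1$ for $x<c$.
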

  \begin{proof}
  Define first:
    \begin{align*}
      R'_\tau(x)\defeq x+\tau,\qquad R''_\tau(x)\defeq x+\tau-1,
    \end{align*}
    and note that
    \begin{align*}
      h^{-1}(x)=\frac {e^{x\log(\alpha+1)}-1} \alpha.
    \end{align*}
    Let $x\in [0,c)$, we show that $R'_\tau\circ h(x)<1$. We have
    \begin{align*}
      R'_\tau\circ h(x)&=R'_\tau(h(x))=h(x)+\tau\\
      &=h(x)+h(d)=\frac {\log[(\alpha x+1)(\alpha d+1)]} {\log(\alpha+1)}. 
    \end{align*}
    To prove that the quantity above is strictly smaller than $1$, observe that $(\alpha x+1)(\alpha d+1)>1$ and $\alpha+1>1$, so since $\log$ is monotone increasing in $[1,\infty)$ it suffices to show that:
    \begin{align*}
      (\alpha x+1)(\alpha d +1)<\alpha+1.
    \end{align*}
    One can prove that the inequality above holds by noting that the left-hand side is strictly smaller than $(\alpha c+1)(\alpha d+1)$, which is equal to the right hand side. Now since $R'_\tau\circ h(x)<1$ we see that $R_\tau\circ h=R'_\tau\circ h$ for $x\in[0,c)$. Finally, a simple computation shows that for $x\in [0,c)$ we have
    \begin{align*}
      h^{-1}\circ R_\tau\circ h(x)=\frac {(\alpha x+1)(\alpha d+1)-1} \alpha = \frac {1-d} c x + d,
    \end{align*}
    which proves that the statement of the lemma holds for $x$ in $[0,c)$. If $x\in [c,1)$, from the same argument as above we can see that $R'_\tau\circ h(x)\ge 1$, so $R_\tau\circ h(x)=R''_\tau\circ h(x)$. Hence
    \begin{align*}
      R_\tau\circ h(x) &= \frac {\log(\alpha x+1)} {\log(\alpha+1)}+\frac {\log(\alpha d+1)} {\log(\alpha+1)}-1\\
                       &=\frac {\log\left[\frac {(\alpha x+1)(\alpha d+1)}{\alpha+1}\right]} {\log (\alpha+1)}
    \end{align*}
    After applying $h^{-1}$ to the quantity above, a simple calculation shows that
    \begin{align*}
      h^{-1}\circ R_\tau \circ h(x)=\frac d {1-c}x-\frac {cd} {1-c},
    \end{align*}
    for $x\in [c,1)$. 
  \end{proof}
  Now we show how this lemma can be used to decide whether $t\in\unit$  is reached from $x_0\in\unit$ by applying $f$. As a consequence of the lemma and $h$ being a bijection we have that for all $n\in\nat$
  \begin{align*}
    f^n(x_0)=h^{-1}\circ R_\tau^n\circ h(x_0),
  \end{align*}
  whence, the quantity above is equal to $t$ if and only if there exists some $n\in\nat$ such that
  \begin{align*}
    h(t)=R_\tau^n\big(h(x_0)\big). 
  \end{align*}
  Which is equivalent to the question of whether there exists some $n\in\nat$ such that
  \begin{align*}
    h(t)\equiv h(x_0)+n\tau\mod\intg,
  \end{align*}
  which amounts to asking whether there are positive integers $n,m$ such that
  \begin{align*}
    m+h(t)=h(x_0)+n\tau.
  \end{align*}
  The equation above, through a simple calculation, is shown to hold if and only if one can find positive integers $n,m$ such that
  \begin{align*}
    m=\frac {\log \left[\frac{(\alpha x_0+1)(\alpha d+1)^n}{(\alpha t+1)}\right]} {\log (\alpha+1)}.
  \end{align*}
  Since $\alpha>1$ and $\log$ is injective, the equation above holds if and only if
  \begin{align*}
    (\alpha+1)^m=\frac{\alpha x_0+1}{\alpha t+1} (\alpha d+1)^n. 
  \end{align*}
  Every factor in the equation above is a rational number, and consequently we can decide if there are positive integers $n,m$ such that the equation holds. This terminates the proof of decidability when $f$ is a bijective \pam with two intervals whose slopes are positive. Now we treat the case in which $f$ is not surjective. 
  \subsection{Maps with Gaps}
  \label{subsec:gaps}
  Suppose that the injective \pam $f$ with two intervals and positive slopes is \emph{not} surjective, so it maps $[0,c)$ to some $[a_1,b_1)$ and $[c,1)$ to some $[a_2,b_2)$. By injectivity and \eqref{eq:twist}, $b_2\le a_1$.
      \begin{center}
      \mfbox{  
        \begin{minipage}{0.4\textwidth}
          \includegraphics[width=\textwidth]{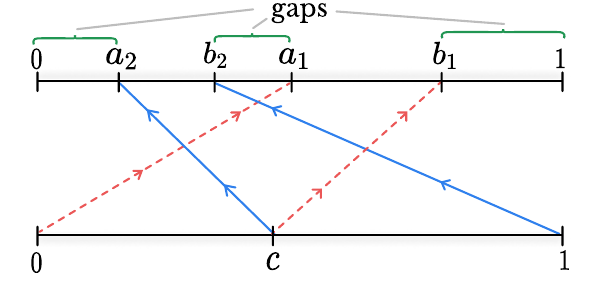}
        \end{minipage}
      }
    \end{center}    

  Since it is not surjective, at least one of the following strict inequalities has to hold: $0<a_2$ (there is a gap to the left of the first image), $b_2<a_1$ (a gap between the two images), $b_1<1$ (a gap to the right of the second image). If there is a gap on the sides, \ie if at least one of the inequalities $0<a_2$, $b_1<1$, hold, we proceed as follows.

  If $c$ does not belong to the interval $I:=[a_2,b_1)$ then the interval reachability graph $G_f$ is such that every vertex has a single outgoing edge, and hence we can decide reachability thanks to \Cref{lem:easy}. Suppose that $c\in I$, and consider $f'$ the restriction of $f$ to the set $I$. Since the slopes are positive, clearly we see that $f'$ is a \pam from the interval $I$ to itself. If the target $t$ is not in $I$, then it clearly cannot be reached. If the starting point $x_0$ is not in $I$, then $f(x_0)$ is in $I$. So the question of whether $t$ is reachable from $x_0$ via $f$, can be reduced to the question of whether $t$ is reachable from $f(x_0)$ via the new map $f'$. As in the beginning of the present section, we can scale $I$ to the unit interval $\unit$ and define another \pam $f''$, which will not have any side gaps.

  The only remaining case is of a \pam that maps $[c,1)$ to some $[0,a)$ and $[0,c)$ to some $[b,1)$. This family of \pam has been studied by Laurent and Nogueira in the recent paper~\cite{LGarxiv, LG21}. There the authors give a rather explicit description of the dynamics of these maps, which we will exploit to decide reachability. Analogously to the preceding section, Laurent and Nogueira, construct a map using a Hecke-Mahler type series, which allows one to view the given \pam as a certain rotation. 

  Clearly, the \pam under consideration are characterised by the three rationals $a,b$ and $c$. In order to unify the notation with that of~\cite{LG21}, we note that the same family can be characterised by three different rationals, namely the slope of the first affine component, denoted $\lambda>0$, the translation of the same denoted $\delta$ and a positive real $\mu>0$ involved in the definition of the second component. To be more precise, the family of \pam that we consider in this last subsection can be defined by three real numbers $\lambda,\mu,\delta$ with the following properties:
  \begin{align*}
    &0 < \lambda < 1,\\
    &0 < \mu,\\
    &1-\lambda < \delta < d_{\lambda,\mu}\defeq
      \begin{cases}
        1 & \text{ if }\lambda\mu<1,\\
        \frac{\mu-\lambda\mu}{\mu-1} & \text{ if }\lambda\mu\ge 1. 
      \end{cases}
  \end{align*}
  So that a \pam $f_{\lambda,\mu,\delta}$ from $\unit$ to $\unit$ is defined as follows, let $c:=(1-\delta)/\lambda$ and
  \begin{align*}
    f_{\lambda,\mu,\delta}(x)\defeq
    \begin{cases}
      \lambda x+\delta & \text{ if }x\in [0,c),\\
      \mu(\lambda x+\delta-1) & \text{ if }x\in [c,1). 
    \end{cases}
  \end{align*}
  Note that we have not lost any generality by assuming that $\lambda<1$ due to the following reason. Since $f$ is injective but not surjective, one of the affine components must be contracting, \ie it must have slope $<1$. If it is not the first affine component (as we have assumed here with $\lambda<1$), then we can permute the components by applying the bijection $h(x)=1-x$ as was done above. 
  
  Briefly, the dynamics look as follows. There is a unique cyclic orbit $C=\set{c_0,\ldots,c_{k-1}}$~\cite[Theorem 3]{LG21}, so $f$ sends $c_i$ to $c_{i+1\ \mathrm{mod}\ k}$. All other orbits are infinite, but they have $k$ accumulation points, namely the points in $C$. In other words, all other orbits approximate the unique cyclic orbit $C$.

  Our first step is to decide whether the starting point $x_0$ or the target $t$ belong to $C$. For this we need to effectively determine an upper bound on the length of the cycle, which can be done by computing the \emph{rotation number} of $f$. The rotation number $\rho$ of $f$ is rational when the constants $\lambda,\mu,\delta$ are rational, due to a classical transcendence result~\cite[Theorem 3]{LG21}. Furthermore the rotation number $\rho$ is equal to $\rho=p/q$ for some positive and co-prime integers $p,q$ if and only if
  \begin{align*}
    F_1(\lambda,\mu,\frac p q) \le \delta < F_2(\lambda,\mu,\frac p q),
  \end{align*}
  where $F_1,F_2$ are a pair of algebraic functions given explicitly in~\cite[Theorem 3]{LG21}. From these facts, it is plain that we can compute $\rho$. Indeed, take any enumeration of $\rat$ and for each element decide whether the inequalities above hold; such a procedure must halt, because we know that there exists a rational $\rho=p/q$ for which the inequalities above hold. To speed up this process, we may search among the Farey sequence and use the following upper bound on $\rho$:
  \begin{align*}
    r_{\lambda,\mu}\defeq
    \begin{cases}
      1 & \text{ if }\lambda\mu<1,\\
      \frac {\log \frac 1 \lambda} {\log \mu} & \text{ if }\lambda\mu\ge 1.
    \end{cases}
  \end{align*}

  Let $\rho=p/q$ be the computed rotation number, then part (ii) of ~\cite[Theorem 3]{LG21} implies that the unique cycle $C$ has length $q$. Compute the rational numbers:
  \begin{align*}
    f(x_0),f^2(x_0),\ldots,f^{q}(x_0).
  \end{align*}
  If one of the entries in this finite sequence is equal to $x_0$, then $x_0$ belongs to $C$. Similarly compute
  \begin{align*}
    f(t),f^2(t),\ldots,f^{q}(t),
  \end{align*}
  and see whether one of the entries is equal to $t$ to ascertain whether $t\in C$. If both the starting point $x_0$ and the target are in $C$, then clearly the target can be reached. If one of them is in $C$ but the other one is not, then $t$ cannot by reached from $x_0$, due to injectivity of $f$. In the case when neither $x_0$ or $t$ is in $C$, we will effectively compute a threshold $N\in\nat$ after which the target $t$ cannot be reached.
  \begin{lemma}
    \label{lem:threshold}
  If neither the starting point $x_0$ nor the target $t$ belong to the cycle $C$ then we can effectively determine a threshold $N\in\nat$ such that for all $n>N$ we have
  \begin{align*}
    t\ne f^{n}(x_0). 
  \end{align*}
\end{lemma}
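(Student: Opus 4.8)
The plan is to exploit the fact, recalled from Laurent--Nogueira, that every orbit outside the cycle $C$ converges to $C$ in a controlled, geometric fashion. More precisely, since neither $x_0$ nor $t$ lies on $C$, the orbit $\orbit(f,x_0)$ is infinite with accumulation set exactly $C = \{c_0,\ldots,c_{q-1}\}$, and by passing to the $q$-th iterate $f^q$ we may focus on a single accumulation point $c_j$. On the interval containing $c_j$, the map $f^q$ is (locally) affine with slope equal to the product of the slopes of $f$ along the cycle; because each $c_i$ has a well-defined one-sided behaviour and the cyclic orbit is attracting for the non-periodic orbits, this product is $<1$ in absolute value on the relevant side (this is precisely the content of the detailed dynamics in~\cite{laurent-2019-dynam-piecew}, or can be extracted from the fact that $\lambda<1$ contributes the only contracting factor while the expanding factor $\mu$ is balanced out by the cycle structure — one contraction per full turn). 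Consequently the points $f^{qk+r}(x_0)$, for fixed residue $r$, approach $c_{j}$ like a geometric sequence with explicit ratio $\kappa<1$ that is a rational (or at worst algebraic) number computable from $\lambda,\mu,\delta$ and the already-computed cycle.

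First I would compute the cycle $C$ explicitly (its points are rationals, obtainable from $f(x_0),\ldots$ having already detected periodicity, or directly from the formulas in~\cite{laurent-2019-dynam-piecew}), and for each $c_j$ determine on which side the orbit of $x_0$ approaches it and the local affine form of $f^q$ there. This yields a constant $\kappa<1$ and, from the starting data, a constant $B>0$ with
\begin{align*}
  \bigl| f^{n}(x_0) - c_{j(n)} \bigr| \le B\,\kappa^{\lfloor n/q\rfloor}
\end{align*}
for all sufficiently large $n$, where $j(n)$ is the residue-determined accumulation point; one has to be slightly careful here because the orbit may still make finitely many "large" excursions before settling into the asymptotic regime, but that finite prefix can simply be absorbed into the threshold. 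Since $t\notin C$, we have $\varepsilon := \min_{0\le i<q} |t - c_i| > 0$, and this $\varepsilon$ is an explicitly computable positive rational. Then for any $n$ with $B\,\kappa^{\lfloor n/q\rfloor} < \varepsilon$ we get $|f^n(x_0) - c_{j(n)}| < \varepsilon \le |t - c_{j(n)}|$, so $f^n(x_0)\ne t$; solving $B\kappa^{\lfloor n/q\rfloor} < \varepsilon$ for $n$ gives the desired threshold $N = q\bigl(1 + \lceil \log(B/\varepsilon)/\log(1/\kappa)\rceil\bigr)$ (with the finite prefix added on).

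The main obstacle I expect is not the geometric-convergence bookkeeping but rather extracting a \emph{clean, effective} statement of the convergence rate and of the local structure near each $c_i$ from~\cite{laurent-2019-dynam-piecew}: one must know not merely that orbits accumulate on $C$ but that they do so at a rate bounded by a computable $\kappa<1$, and one must handle the subtlety that $f^q$ near a cycle point is defined piecewise (the orbit could, in principle, oscillate between being just-left and just-right of $c_i$ across different turns). I would deal with this by checking that the cycle is \emph{one-sided attracting} — each non-periodic orbit approaches each $c_i$ monotonically from a fixed side after a finite transient, which follows from injectivity and the ordering of images forced by~\eqref{eq:twist} together with the explicit description in~\cite{laurent-2019-dynam-piecew} — so that on the relevant side $f^q$ is genuinely a single affine contraction. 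Everything else (computing $C$, computing $\varepsilon$, computing $\kappa$ and $B$, solving the exponential inequality) is elementary and manifestly effective.
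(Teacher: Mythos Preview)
Your approach is correct in spirit and would work, but the paper takes a cleaner, more global route that sidesteps exactly the subtleties you flag. Instead of tracking the single orbit $\orbit(f,x_0)$ and arguing that $f^q$ is a one-sided affine contraction near each cycle point, the paper bounds the \emph{entire image} $f^n(\unit)$: by Laurent--Nogueira's Proposition~6 and its corollary, $f^n(\unit)$ is a disjoint union of $q$ intervals $H_0,\ldots,H_{q-1}$, each containing a cycle point $\Phi(k/q)$, and the total Lebesgue measure of $f^n(\unit)$ is at most $(\lambda^q\mu^p)^{\lfloor n/q\rfloor}$. One then computes (by approximating the Hecke--Mahler series for $\Phi$) an $\epsilon>0$ such that $t$ is at distance $>2\epsilon$ from every $\epsilon$-interval around a cycle point, and takes $N$ with $(\lambda^q\mu^p)^{\lfloor N/q\rfloor}<\epsilon$; for $n>N$ each $H_k$ has length $<\epsilon$, so $t\notin f^n(\unit)$ and in particular $t\ne f^n(x_0)$. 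This global measure argument never asks from which side the orbit approaches a given $c_i$, never worries about whether some $c_i$ coincides with the cutpoint $c$ (where $f^q$ would fail to be a single affine map), and has no ``finite transient'' to absorb --- the bound holds for all $n$ uniformly. Your local-contraction approach buys you something in return: once you observe that the cycle points are genuinely \emph{rational} (they solve a rational linear fixed-point equation once the itinerary is known, and the itinerary can be enumerated), you get $\varepsilon=\min_i|t-c_i|$ exactly rather than via series approximation; but the paper's argument is shorter precisely because it cites the two ready-made facts \eqref{eq:fact 1}--\eqref{eq:fact 2} instead of reconstructing the local dynamics.
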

\begin{proof}
Let $\rho=p/q$ be the rotation number computed above. We explain how we can compute this threshold. To do that, we need to first have a better understanding of the points that are in the cycle $C$. They are given via the Hecke-Mahler series; more precisely the cycle $C$ is equal to the set of points $\Phi(k/q)$, $0\le k\le q-1$, where the function $\Phi$ is given by the series:
  \begin{align*}
    &\Phi(x)\defeq \lfloor x\rfloor+ \frac {1-\delta} \lambda \\ 
           &+ \sum_{n\ge 0}\lambda^n\mu^{\lfloor x\rfloor - \lfloor x-n\rho\rfloor}\bigg(\frac {\lambda+\delta-1} \lambda +\lfloor x-(n+1)\rho\rfloor -\lfloor x-n\rho\rfloor\bigg).
  \end{align*}
  This function converges when $\lambda\mu^\rho<1$, which holds for our case. Immediately, we see that given any $\epsilon>0$ and any positive rational number $r\in\rat$, $r>0$, we can compute an interval $I$ of length $\epsilon$ such that $\Phi(r)\in I$. In other words, we can approximate $\Phi(r)$ to arbitrary additive precision. Compute an $\epsilon>0$ (by \eg trying $2^{-n}$ for larger and larger $n$) with the following property. For $\epsilon$ and $\Phi(k/q)$, $0\le k \le q-1$, using the approximation above compute the interval $J_k$ of length $\epsilon$, such that the distance from the target $t$ and any interval $J_k$, $0\le k \le q-1$, is at least $2\epsilon$. Such an $\epsilon$ exists due to the fact that $t$ is not equal to any $\Phi(k/q)$ since it does not belong to the cycle $C$. Note that by construction of $\epsilon$, if we take \emph{any} other intervals $J'_k$ of length $\epsilon$ containing $\Phi(k/q)$, $0\le k\le q-1$, $t$ will be outside all of them. 

  Now the following two facts will conclude the proof of the lemma: For all $n\in\nat$,
  \begin{align}
    \label{eq:fact 1}
    &f^n(\unit) \text{ has measure smaller than }(\lambda^q\mu^p)^{\lfloor n/q\rfloor},\text{ and }\\
    \label{eq:fact 2}
    &f^n(\unit)=\biguplus_{k=0}^{q-1}H_k, 
  \end{align}
  where $H_k$ is an interval containing $\Phi(k/q)$, and the measure is the Lebesgue measure. 
  
  The statement \eqref{eq:fact 1} is from Proposition~6 in~\cite{LG21}, while statement \eqref{eq:fact 2} is in the succeeding corollary. Choose an $N\in\nat$ such that
  \begin{align*}
    (\lambda^q\mu^p)^{\lfloor N/q\rfloor}<\epsilon.
  \end{align*}
  From \eqref{eq:fact 2} we see that for all $n>N$, $f^n(\unit)$ is made out of $q$ disjoint intervals $H_k$, each one of length smaller than $\epsilon$, consequently by construction of $\epsilon$, we have
  \begin{align*}
    t\notin f^n(\unit).
  \end{align*}
\end{proof}
Thus we have concluded the proof of the main theorem, \Cref{th:injective}. 
\section{Related Decision Problems}
\label{sec:related problems}

A more careful examination of the proof of \Cref{th:injective} shows that the same can be used to demonstrate decidability of certain variants of the reachability problem. For example, the \textbf{point-to-interval} reachability: given an interval $I\subset\unit$ and $x_0$ decide whether there exists some $n\in\nat$ such that $f^n(x_0)\in I$. First, we observe that for general \pam this problem is no harder than the point-to-point reachability considered in the prequel.
\begin{proposition}
  \label{prop:point to interval}
  The point-to-interval reachability problem can be effectively reduced to the point-to-point reachability problem. 
\end{proposition}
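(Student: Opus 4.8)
The plan is to reduce point-to-interval reachability to point-to-point reachability by modifying $f$ into a new \pam $g$ that ``traps'' the interval $I$ at a single rational point, which then serves as the point target. Write $I=[\ell,r)$ with $\ell,r\in\rat$, following the half-open convention of the paper. If $x_0\in I$ the answer is trivially yes (taking $n=0$, or equivalently asking the same question about $f(x_0)$ if one prefers $n\ge 1$), so we assume throughout that $x_0\notin I$. Pick the rational point $t^*\defeq\ell\in I$ and define
\begin{align*}
  g(x)\defeq\begin{cases}
    f(x) & \text{if }x\notin I,\\
    t^* & \text{if }x\in I.
  \end{cases}
\end{align*}

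First I would check that $g$ is a genuine \pam. Its associated partition is the common refinement of the partition of $f$ with the partition $\set{[0,\ell),[\ell,r),[r,1)}$: on each block, $g$ is either a restriction of one of the affine components of $f$ or the constant map $x\mapsto t^*$ (which is affine with slope $0$), and in every case the block has rational endpoints and the affine coefficients are rational. Moreover $\im(g)\subseteq\unit$ since $t^*\in\unit$. Note also that $t^*\in I$, hence $g(t^*)=t^*$; that is, $t^*$ is a fixed point of $g$, so once the $g$-orbit enters $I$ it stays at $t^*$ forever.

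Next I would establish the orbit-coincidence claim: for $x_0\notin I$,
\begin{align*}
  (\exists n\in\nat:\ f^n(x_0)\in I)\quad\Longleftrightarrow\quad(\exists n\in\nat:\ g^n(x_0)=t^*).
\end{align*}
For the forward direction, let $m\ge 1$ be least with $f^m(x_0)\in I$; then $f^k(x_0)\notin I$ for all $k<m$, and by induction on $k\le m$ (using that $g$ agrees with $f$ off $I$) one gets $g^k(x_0)=f^k(x_0)$, so $g^m(x_0)=f^m(x_0)\in I$ and hence $g^{m+1}(x_0)=t^*$. For the converse, suppose $g^n(x_0)=t^*$ for some $n$; since $x_0\notin I$ we have $n\ge 1$, and we may take $n'\le n$ least with $g^{n'}(x_0)\in I$ (such an $n'$ exists because $t^*\in I$). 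For every $k<n'$ we have $g^k(x_0)\notin I$, so by the same induction $g^{k}(x_0)=f^{k}(x_0)$ for all $k\le n'$; in particular $f^{n'}(x_0)=g^{n'}(x_0)\in I$. This proves the equivalence, so the point-to-interval instance $(f,x_0,I)$ is answered exactly by the point-to-point instance $(g,x_0,t^*)$.

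There is no serious obstacle here; the only points requiring care are the bookkeeping around the index of first entry into $I$ (together with the trivial special-casing of $x_0\in I$ and the convention for $\nat$), and the routine verification that the refined partition together with the constant component still yields a bona fide piecewise affine map with rational data. If $I$ is presented in another form (open, or closed on the right), the only change is in the choice of the rational trap point $t^*\in I$, which exists whenever $I$ has nonempty interior; a degenerate $I$ consisting of a single point reduces directly to point-to-point reachability.
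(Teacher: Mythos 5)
Your proof is correct and uses essentially the same construction as the paper: replace $f$ on the target interval $I$ by a constant map to a single rational trap point $t^*$, and observe that the modified orbit reaches $t^*$ iff the original orbit enters $I$. The one (minor but clean) deviation is that you place the trap $t^*$ inside $I$, making it a fixed point and keeping the map within $\unit$, whereas the paper sends $I$ to a point $t>1$ outside the unit interval and then rescales — your choice avoids that extra rescaling step.
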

\begin{proof}
  Let $f$ be a \pam and $I_1,\ldots, I_\ell$ its intervals. Let $x_0$ be the initial point and $I$ the given target interval. We effectively construct another \pam $f'$ as follows. The intervals $I_j$ of $f$ that do not intersect the target interval $I$ are also intervals of $f'$ with the same associated affine map. For all intervals $I_j$ that intersect $I$ but are not contained in it, $f'$ has the interval $I_j\setminus I$ with the affine map corresponding to $I_j$. For $I$ the associated affine map is the constant map $g(x)=t$ for some rational $t>1$.

  It follows that the orbit of $x_0$ under $f$ intersects the target interval $I$ if and only if the point $t$ belongs to the orbit of $x_0$ under $f'$. We can now scale the \pam $f'$ to make it a map from the unit interval to itself. 
\end{proof}
We cannot directly use \Cref{prop:point to interval} to prove that point-to-interval reachability in injective \pam with two intervals is decidable, because in the reduction the number of intervals is increasing. However we can show that a procedure exists by applying the analysis that was done in the preceding section. Here is a sketch of the proof. 

\begin{theorem}
  \label{th:point to interval}
  The point-to-interval reachability problem is decidable for injective \pam with two intervals. 
\end{theorem}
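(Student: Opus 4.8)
The plan is to retrace the proof of \Cref{th:injective} and check, at each of its stages, that the reduction used there transports point-to-interval queries and not merely point-to-point ones. Two stages are immediate. First, \Cref{lem:easy} already decides point-to-interval reachability: item~(ii) of its proof is precisely about reaching an interval with an affine map, so its case analysis on the interval reachability graph goes through verbatim (using (ii) in place of (i), and the interval $I\cap I_j$ in place of the point $t$, at the final step). Second, the bijection $h(x)=1-x$ used repeatedly in \Cref{subsec: negative slopes} and \Cref{subsec:gaps} carries intervals to intervals, so it remains harmless here.

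For the negative-slope reductions (\Cref{lem:neg1}, \Cref{lem:neg2}, and the discussion around them) and for side-gap removal, the key observation is that the simplified \pam $g$ is obtained by composing a bounded number of the original affine branches, so $\orbit(f,x_0)$ equals $\orbit(g,x_0')$ (for an effectively computable rational $x_0'$) together with, inserted between consecutive points, a run of boundedly many ``bridge'' points, each the image of some orbit point of $g$ under a fixed finite composition of $f_1,f_2$. Hence $\orbit(f,x_0)\cap I\neq\emptyset$ if and only if $\orbit(g,x_0')\cap I'\neq\emptyset$ for one of finitely many effectively computable intervals $I'$: the interval $I$ itself, together with the preimages of $I$ under the bridge maps, each intersected with the appropriate sub-interval of the domain of $g$. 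Since the chain $f,g_1,g_2,\dots$ of \Cref{lem:neg2} terminates, induction along it reduces the negative-slope case to \Cref{lem:easy} or to the positive-slope subcases; the side-gap case of \Cref{subsec:gaps} is treated the same way, by restricting $f$ to its invariant sub-interval $W$, rescaling to $\unit$, and taking $I\cap W$ as the new target (or testing $x_0\in I$ when $I\cap W=\emptyset$).

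In the bijective subcase, \Cref{lem:similar to a rotation} gives $f=h^{-1}\circ R_\tau\circ h$ with $h$ an increasing self-homeomorphism of $\unit$, so $f^n(x_0)\in I$ if and only if $R_\tau^n\big(h(x_0)\big)\in h(I)$, and $h(I)$ is again a sub-interval of $\unit$, non-degenerate unless $I$ is a single point (the point-to-point case, already handled). Whether $\tau=\log(\alpha d+1)/\log(\alpha+1)$ is rational is decidable by comparing the prime factorisations of the rationals $\alpha d+1$ and $\alpha+1$, and $\tau$ can then be computed. If $\tau$ is irrational, the forward $R_\tau$-orbit of $h(x_0)$ is dense in $\unit$ and hence meets the interval $h(I)$, so the answer is ``yes''. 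If $\tau=p'/q'$, the orbit is periodic and it suffices to test the $q'$ points $R_\tau^k\big(h(x_0)\big)$ for $0\le k<q'$; by the computation in \Cref{subsec:bij}, such a point lies in $h(I)=[h(\ell),h(u))$---writing $I=[\ell,u)$---exactly when the effectively computable rational $\big((\alpha x_0+1)(\alpha d+1)^k(\alpha+1)^{-m_k}-1\big)/\alpha$, with $m_k=\lfloor h(x_0)+k\tau\rfloor$, lies in $[\ell,u)$.

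Finally, for the Laurent--Nogueira family $f_{\lambda,\mu,\delta}$, note that its unique cycle $C=\{c_0,\dots,c_{q-1}\}$ consists of rationals: each $c_j$ is the fixed point of an affine map with rational coefficients---a composition of $q$ affine branches of $f$---whose slope $\lambda^q\mu^p$ is strictly less than $1$; moreover $C$ can be computed effectively, for instance by trying all period-$q$ itineraries. If $x_0\in C$, then $\orbit(f,x_0)=C$ and we test $C\cap I$. If $x_0\notin C$, then by \eqref{eq:fact 1} and \eqref{eq:fact 2} all but finitely many orbit points lie in arbitrarily small neighbourhoods of $C$, and once such a neighbourhood is small enough, $f^q$ restricted to it is an orientation-preserving contraction towards the cycle point it contains; hence each residue class of the orbit, $\big(f^{kq+r}(x_0)\big)_k$, converges monotonically---from a side we can compute, to a point of $C$ we can compute. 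Mimicking the estimate of \Cref{lem:threshold}, we produce a threshold $N$ such that for every $n>N$ the status of $f^n(x_0)$ relative to $I$ is determined by whether its limiting cycle point lies in the interior of $I$, equals $\ell$ and is approached from the right, or equals $u$ and is approached from the left---all decidable, since $C$, $\ell$ and $u$ are rational. The answer is then ``yes'' precisely when some $f^n(x_0)$ with $n\le N$ lies in $I$, or some residue class $r$ satisfies one of these three conditions. This final case is the main obstacle and the only real departure from the point-to-point argument: a point-to-interval query may be settled by the infinite tail of an orbit, and when an endpoint of $I$ is a point of the cycle the answer depends on the side from which the orbit approaches it---which is exactly what the orientation-preserving-contraction observation makes computable.
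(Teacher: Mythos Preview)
Your argument is correct and follows the paper's own proof sketch closely: the same case split (easy graphs, negative slopes, bijections, Laurent--Nogueira maps), the same rationality test for $\tau$ in the bijective case, and the same endpoint analysis in the gap case. Two differences are worth noting. First, in the Laurent--Nogueira case you observe that every cycle point $c_j$ is the fixed point of a rational affine map of slope $\lambda^q\mu^p\in(0,1)$ and is therefore an explicitly computable rational; the paper instead approximates the $c_j$ via the Hecke--Mahler series $\Phi$ as in \Cref{lem:threshold}. Your route is cleaner and makes the side-of-approach test at an endpoint of $I$ a straight comparison of rationals, whereas the paper phrases the same endpoint case via the fixed-point structure of $f^k$ on adjacent intervals. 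Second, in the bijective case with rational $\tau=p'/q'$ you compute $f^k(x_0)$ through the conjugacy and the floor $m_k=\lfloor h(x_0)+k\tau\rfloor$; this works, but is heavier than necessary---once the period $q'$ is known you can simply iterate $f$ directly on the rational $x_0$ for $q'$ steps and test membership in $I$. Neither point affects correctness.
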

\begin{proof}[Proof Sketch]
  Let $f$ be the given \pam, $x_0\in\unit$ the initial point, and $I$ the target interval. When the interval reachability graph of $f$ is particularly simple, as we saw in \Cref{lem:easy}, the reachability problem reduces to that of a single affine map. Deciding whether it is possible to reach an interval, in this case, is trivial. The case of negative slopes in \Cref{subsec: negative slopes} was essentially reduced to that of \pam covered in \Cref{lem:easy}.

  It remains to consider \pam that are bijections (of \Cref{subsec:bij}) and those that have a gap in the middle (of \Cref{subsec:gaps}). For the former, we proved that such \pam are topologically conjugate to rotations in the circle by a quantity:
  \begin{align*}
    \tau=\frac{\log(q_1)}{\log (q_2)},
  \end{align*}
  where $q_1,q_2$ are rational numbers depending on $f$. It is a basic theorem that
  \begin{align*}
    \set{n\tau\ \mathrm{mod}\ \intg\st n\in\nat}
  \end{align*}
  is dense in $\unit$ if and only if $\tau$ is irrational. If $\tau$ is rational, on the other hand, the set above is finite. Hence, if $\tau\notin\rat$ then every orbit of $f$ is dense in $\unit$, therefore every interval is reached from every point. If $\tau\in\rat$ however, then every orbit of $f$ is periodic, so we would only need to compute finitely many iterations of $f$ and check whether any of them send $x_0$ to $I$. Finally to decide whether $\tau$ is rational we proceed as follows. We observe that $\tau\in\rat$ if and only if there are integers $a,b$ such that
  \begin{align*}
    q_1^a=q_2^b,
  \end{align*}
  which one can decide easily by looking at the prime factorisation of $q_1,q_2$.

  In maps that have gaps in the middle, from \cite{LG21}, we know that there is a single periodic orbit with points $c_0,\ldots, c_{k-1}$ whose length $k$ we can compute. First we check whether the initial point $x_0$ belongs to this unique periodic orbit, by simply computing the first $k$ entries of the orbit. If it does, then clearly $I$ is reached if and only if one of the first $k$ entries belongs to $I$. Similarly, then check whether one of the endpoints of $I$ belongs to the unique periodic orbit. If not, one can proceed as in \Cref{lem:threshold}, by approximating $c_i$ to either find one that sits inside the interior of $I$, or to compute some threshold $N\in\nat$, after which we know that $f^n(x_0)$ is not in the target $I$. 

  The remaining case is when one of the endpoints of $I$ (call it $c$) belongs to the periodic orbit, but none of the $c_i$ are in the interior of $I$.  It follows from \eqref{eq:fact 1} and \eqref{eq:fact 2} that $f^{nk}(x_0)$, $n\in\nat$ is a Cauchy sequence tending to $c$, and that $c$ is a fixed point of the \pam $f':=f^k$. If $c$ belongs to the interior of one of the intervals defining $f'$, then the problem reduces to a question about a single affine map, and is easily dealt with. If $c$ is the point between the adjacent intervals $J_1,J_2$ in the definition of $f'$, by looking at the corresponding affine maps, one can decide whether both $J_1$ and $J_1$ are visited infinitely often or the orbit stays in only one of them, and decide accordingly. 
\end{proof}

Since in the two more complicated families of \pam the initial point does not play a big role, meaning that most orbits look the same, bar a few small modifications, one can use the proof above to also show that \textbf{interval-to-interval} reachability is decidable for injective \pam with two intervals. That is the decision question where one is given a \pam $f$ with two intervals that is injective, a starting interval $J_0$ and a target interval $J_1$ and is asked to decide whether there exists some $x_0\in J_0$ and natural $n\in\nat$ such that $f^n(x_0)$ is in $J_1$.

Another interesting problem whose decidability comes as a corollary from \Cref{sec:reachability}, is to decide whether the orbit is \textbf{periodic}. More precisely, given a \pam $f$ and $x_0\in\unit$ decide whether there exists two distinct naturals $n,m\in\nat$ such that $f^n(x_0)=f^m(x_0)$.

\begin{theorem}
  The periodic orbit problem is decidable for injective \pam with two intervals. 
\end{theorem}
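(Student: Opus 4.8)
The plan is to follow the same case analysis that powered the proof of \Cref{th:injective}, observing at each step that "is the orbit of $x_0$ eventually periodic / pre-periodic" is a decidable question about the relevant simplified dynamical system. First I would dispense with the easy-graph cases of \Cref{lem:easy}: if the orbit of $x_0$ eventually settles inside a single interval on which $f$ acts as an affine map $g(x)=ax+b$, then the orbit is periodic if and only if $a=1$ and $b=0$ (giving an immediate fixed point), or $a=-1$ (an orbit of period two from the second iterate onwards), or some finite initial segment already exhibits a repeat; since we can compute the tail interval and the finitely many iterates before it is reached, this is decidable. The same reduction handles the negative-slope cases of \Cref{subsec: negative slopes}, which were reduced to maps covered by \Cref{lem:easy}, and the side-gap cases of \Cref{subsec:gaps}.

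Next I would treat the bijective case of \Cref{subsec:bij}. There $f$ is topologically conjugate via the homeomorphism $h$ to a rotation $R_\tau$ with $\tau=h(d)$, so $f^n(x_0)=h^{-1}\circ R_\tau^n\circ h(x_0)$; since $h$ is injective, the orbit of $x_0$ under $f$ is periodic if and only if the orbit of $h(x_0)$ under $R_\tau$ is periodic, which happens if and only if $\tau$ is rational (and then \emph{every} orbit is periodic). As already noted in the proof sketch of \Cref{th:point to interval}, $\tau=\log(\alpha d+1)/\log(\alpha+1)$ is rational if and only if there are positive integers $a,b$ with $(\alpha d+1)^a=(\alpha+1)^b$, which is decidable by inspecting prime factorisations of the rationals $\alpha d+1$ and $\alpha+1$.

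Finally, the maps-with-gaps case of \Cref{subsec:gaps}: by \cite{laurent-2019-dynam-piecew} the only periodic orbit is the unique cycle $C$ of length $q$, which we can compute from the rotation number $\rho=p/q$ exactly as in the proof of \Cref{th:injective}. Every other orbit is infinite and hence not periodic. So here the algorithm is simply to compute $f(x_0),\dots,f^{q}(x_0)$ and declare the orbit periodic precisely when one of these equals $x_0$ (equivalently, when $x_0\in C$). Assembling these cases exhausts all injective \pam with two intervals, which proves the theorem.

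I do not expect any of these steps to present a serious obstacle, since each case is either immediate from the structure already established in \Cref{sec:reachability} or is a routine decidability fact about affine maps, rotations by $\tau$, or the Laurent--Nogueira cycle. If anything, the one point needing mild care is verifying that in the \Cref{lem:easy} reductions the full orbit (including the finite transient before the tail interval is reached) does not contain a spurious repeat that is missed; but this is handled by explicitly computing the finitely many iterates up to the point where the trajectory enters its final interval, and checking them for coincidences directly.
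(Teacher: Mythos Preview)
Your approach is essentially the same as the paper's: both follow the case analysis of \Cref{sec:reachability}, reducing the easy-graph and negative-slope cases to a single affine tail map, handling bijections via rationality of $\tau$, and handling the middle-gap maps via the unique Laurent--Nogueira cycle.

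One small imprecision worth flagging in your treatment of the \Cref{lem:easy} cases: your trichotomy ``$a=1,b=0$; or $a=-1$; or a repeat in the finite initial segment'' does not quite capture the case $|a|\ne 1$ with $y_0=b/(1-a)$, where $y_0$ is the first point in the tail interval. In that situation the tail is immediately fixed (hence periodic), but the repeat $f^{m}(x_0)=f^{m+1}(x_0)$ only becomes visible if you compute \emph{one step past} entry into the tail, not merely ``the finitely many iterates before it is reached''. The paper phrases this case as checking whether the fixed point $b/(1-a)$ of the relevant affine map is reached, which is exactly the missing sub-check; adding it (or equivalently computing one extra iterate) closes the gap.
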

\begin{proof}[Proof Sketch]
  In cases that were covered by \Cref{lem:easy} the orbit can be periodic only if the fixed point of one of the affine maps $g(x)=ax+b$ is reached (when every vertex of $G_f$ has a unique outgoing edge, a slight variation of this idea applies). That fixed point, if it exists, is equal to $b/(1-a)$. So the problem reduces to deciding whether there exists some $n\in\nat$ such that
  \begin{align*}
    g^n(x_0')=a^nx_0'+\frac{a^n-1}{a-1} b = \frac {b} {1-a},
  \end{align*}
  where $x_0'$ can be effectively computed.

  When $f$ is a bijection, in the proof of \Cref{th:point to interval} we saw that the orbit is finite if and only if the quantity $\tau$ is rational, and that this can be decided. When $f$ has a gap in the middle of the two images, we saw in \Cref{subsec:gaps} that there is a unique periodic orbit, whose length we can compute. 
\end{proof}

We conclude this section with a surprising result; we show that a problem related to reachability is expected to be rather difficult to decide, even for injective \pam with two intervals. This is the problem of deciding whether there exists some $n\in\nat$ such that $nf^n(x_0)<c$ for some given $c\in\unit$. So it is asking whether the orbit ever goes inside an interval that is shrinking with time, \ie the interval $[0,c/n)$. Let us call it the \textbf{shrinking interval} problem. The reason we believe this problem should be difficult to decide is because such a procedure would expose a lot of information about the Diophantine approximation properties of certain transcendental numbers, and would consequently answer a number of open problems.
\subsection{Shrinking Interval Problem}
Understanding how well an irrational number $\mu$ can be approximated by rationals is important for the solving Diophantine equations, and other central problems in number theory. That is the question of how small can the quantity
\begin{align*}
  \left|\mu-\frac p q\right|
\end{align*}
be as a function of $q$, when $p,q$ range over integers. As a consequence of the classical theory of continued fractions (see~\cite[Chapter II, Section 8]{khinchin}), it is known that for every real number $\mu$, there are integers $p,q\in\intg$ such that
\begin{align*}
  \left|\mu-\frac p q\right|\le \frac 1 {q^2}. 
\end{align*}
With what smaller constant can we replace $1$ and have the inequality above hold? In other words what can we say about:
\begin{align*}
  L(\mu)\defeq \inf \set{c\in\rel\st \left|\mu-\frac p q\right|\le \frac c {q^2}, \text{ for some }p,q\in\intg}. 
\end{align*}
This number is sometimes called the \textbf{Lagrange constant} of~$\mu$, and it measures in a sense how well can the number $\mu$ be approximated by rationals; numbers that have $L(\mu)>0$ are called badly approximable. We still do not know almost anything about the Lagrange constants of specific numbers; most results we have are of metric nature (\eg almost all real numbers have Lagrange constant $0$), \cite[Chapter III]{khinchin}.

In this subsection we will show that a procedure for the shrinking interval problem, could be used to approximate $L(\mu)$, not for all reals $\mu$, but for a family of reals; namely those that belong to the following set:
\begin{align*}
  S\defeq\set{\frac 1 {1 + \frac {\log {\frac {1-c} d }} {\log \frac {1-d} c}} \st c,d\in\rat\cap (0,1), c+d<1}. 
\end{align*}

The positivity problem for linear recurrence sequences (\ie is there a positive term in the given sequence) has the same property \cite[Section 5]{ouaknine13_posit_probl_low_order_linear_recur_sequen}, namely that a procedure to decide the positivity problem could be used to approximate $L(\mu)$, however, for a $\mu$ in a different set (the arguments of algebraic numbers in the unit interval).

We find it surprising that the same phenomenon occurs even for one-dimensional, injective \pam with two intervals; one would expect that every natural problem is decidable for these systems.

\begin{theorem}
  If the shrinking interval problem is decidable for injective \pam with two intervals, then there exists a procedure that inputs $\mu\in S$, $\epsilon>0$ and computes an interval $I\subset [0,1]$ of length $\epsilon$ such that $L(\mu)\in I$. 
\end{theorem}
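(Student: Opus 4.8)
The plan is to connect the shrinking interval problem for a carefully chosen bijective \pam to the Diophantine approximation of $\mu\in S$. Fix $c,d\in\rat\cap(0,1)$ with $c+d<1$ witnessing membership $\mu = 1/(1+\log(\tfrac{1-c}{d})/\log(\tfrac{1-d}{c}))\in S$. Consider the surjective \pam $f$ from \Cref{subsec:bij} associated to these rationals $c,d$. By \Cref{lem:similar to a rotation}, $f = h^{-1}\circ R_\tau\circ h$, where $h$ is the homeomorphism of \eqref{eq:def h} and $\tau = h(d)$. A short computation should show that the rotation number $\tau$ can be rewritten, using $\alpha=(1-c-d)/(cd)$, so that $\tau = \log(\alpha d+1)/\log(\alpha+1)$; and that $\alpha d+1 = (1-d)/c$ while $\alpha+1 = (1-c)/(cd)$, so that $\tau = \log(\tfrac{1-d}{c})\big/\log(\tfrac{1-c}{cd})$. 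Hence $1/\tau = 1 + \log(\tfrac{1-c}{d})/\log(\tfrac{1-d}{c})$ — I would double-check the algebra here — and therefore $\mu = \tau$ is precisely the rotation number of $f$. (If the identities come out to $1-\tau$ or $1/\tau$ rather than $\tau$, one simply replaces $\mu$ by the appropriate equivalent-under-$\mathrm{GL}_2(\intg)$ irrational, which has the same Lagrange constant.)

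Next I would relate $L(\mu)$ to the orbit behaviour of $f$ near the point $0$. Since $\mu$ is irrational, the orbit of any $x_0$ under the rotation $R_\mu$ is equidistributed, and the quantity $\min_{1\le k\le n} \|k\mu\|$ (distance to the nearest integer) measures exactly how well $\mu$ is approximated by fractions with denominator $\le n$; by the classical theory, $L(\mu) = \liminf_{q\to\infty} q\,\|q\mu\|$. The key translation step is: under the conjugacy $h$, the event "$f^n(x_0)$ is very close to $0$" corresponds to "$R_\mu^n(h(x_0))$ is very close to $h(0)=0$", i.e.\ to $\|h(x_0)+n\mu\|$ being small. Choosing the initial point $x_0$ so that $h(x_0)=0$, i.e.\ $x_0 = 0$, we get that $f^n(0)$ is small exactly when $\|n\mu\|$ is small. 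More precisely, because $h$ is $C^1$ with $h(0)=0$ and $h'(0)=\alpha/\log(\alpha+1)>0$, we have $f^n(0) = h^{-1}(\|n\mu\|) \asymp \|n\mu\|$ near $0$, with the constant of proportionality a computable rational expression in $c,d$. Consequently the predicate "$n f^n(0) < c_0$" (for a rational threshold $c_0\in\unit$ fed to the shrinking interval oracle) becomes, up to the known distortion factor, the predicate "$n\,\|n\mu\| < c_0'$" for a related rational $c_0'$; and one must be slightly careful that the extra iterations $n f^n(0) < c_0$ versus $(n-1)f^{n-1}(0)$ etc.\ do not cause the correspondence between "$n\cdot(\text{distance})$" and "$q\cdot\|q\mu\|$" to drift — I would handle this by noting that if $\|n\mu\|$ is the running minimum then it is attained at a continued-fraction convergent denominator $q_j\le n$, and $q_j\|q_j\mu\| \le n\|n\mu\| \le q_{j+1}\|q_j\mu\|$, so the $\liminf$ is unaffected.

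The procedure is then a binary search. Given $\mu\in S$ (presented by the rationals $c,d$) and $\epsilon>0$, I would choose a finite sequence of rational thresholds $c_0^{(1)}, c_0^{(2)},\ldots$ refining the interval $[0,1]$ to width $\epsilon$, and for each one call the hypothetical decision procedure for the shrinking interval problem on the \pam $f$, initial point $x_0=0$, and threshold $c_0^{(i)}$. A "yes" answer (the orbit enters $[0,c_0^{(i)}/n)$ for some $n$) gives a lower bound refinement; a "no" answer gives an upper bound refinement — but here one must be careful, since "there is no $n$ with $n f^n(0)<c_0$" only tells us $\liminf_n n f^n(0) \ge c_0$ if the $\liminf$ is actually attained as a minimum over finitely many $n$, which it need not be. The honest statement is that a "yes" for $c_0$ gives $\liminf \le c_0/(\text{distortion})$ only if infinitely many $n$ witness it, and a single witness only gives that $\liminf$ is at most... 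This is the main obstacle: the shrinking interval problem asks about \emph{existence} of one $n$, whereas $L(\mu)$ is a $\liminf$. I would resolve it by observing that by equidistribution of $\{n\mu\}$, if the orbit ever enters $[0, c_0/n)$ for \emph{some} $n$ then — because $\mu$ is irrational and the conjugacy spreads the orbit densely — one can show it enters $[0, c_0'/n)$ for infinitely many $n$ for any $c_0' > c_0$ slightly larger, using the three-distance theorem / the self-similar structure of the sequence of best approximations; equivalently, the set $\{\, n : n\|n\mu\| < c_0\,\}$ is either empty or infinite when $c_0$ is not an isolated value of $q\|q\mu\|$, which holds for all but countably many thresholds, and we only ever query rational thresholds, avoiding the bad set by perturbing. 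Thus each oracle call pins $L(\mu)$ strictly above or strictly below a known rational, and finitely many calls localize $L(\mu)$ to an interval of length $\epsilon$, which the procedure outputs.
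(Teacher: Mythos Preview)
Your overall architecture matches the paper's proof: choose the bijective \pam $f$ of \Cref{subsec:bij} associated with $c,d$, identify $\mu$ with the rotation number $\tau$ via \Cref{lem:similar to a rotation}, translate the predicate $n f^n(0)<\gamma$ through the homeomorphism $h$ into a Diophantine inequality for $\mu$, and binary-search on $\gamma$. That is exactly what the paper does.

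The substantive issue is your treatment of the ``$\liminf$ vs.\ single witness'' obstacle. You have misread the definition: in this paper $L(\mu)$ is defined as
\[
L(\mu)=\inf\Bigl\{c\in\rel : \bigl|\mu-\tfrac{p}{q}\bigr|\le \tfrac{c}{q^2}\ \text{for \emph{some} }p,q\in\intg\Bigr\},
\]
which equals $\inf_{q\ge 1} q\,\|q\mu\|$, not $\liminf_{q\to\infty} q\,\|q\mu\|$. With this definition the difficulty you flag simply does not arise: a \emph{yes} from the oracle at threshold $\gamma$ exhibits a single pair $(n,m)$ with $|\mu-m/n|<h(\gamma/n)/n$, and that already bounds the infimum from above; a \emph{no} says the inequality fails for every $(n,m)$, bounding the infimum from below. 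The paper then makes the conversion precise via the elementary asymptotic
\[
(1-\epsilon')\,\frac{\alpha\gamma}{n^2}\ \le\ \frac{h(\gamma/n)}{n}\ \le\ (1+\epsilon')\,\frac{\alpha\gamma}{n^2}
\]
for $n$ large (from the Maclaurin expansion of $h$), which you should also state explicitly rather than leave as ``$\asymp$''.

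More importantly, the workaround you introduce is false. For a badly approximable $\mu$ with $\inf_q q\|q\mu\| < \liminf_q q\|q\mu\|$ (which does occur), any threshold $c_0$ strictly between these two values makes $\{\,n : n\|n\mu\|<c_0\,\}$ nonempty but finite. So the claim that this set is ``empty or infinite'' outside a countable exceptional set of thresholds is incorrect, and the three-distance heuristic does not rescue it. You should drop that paragraph entirely and instead invoke the paper's $\inf$-definition, after which the binary search works without further justification. (Also note your ``yes $\Rightarrow$ lower bound, no $\Rightarrow$ upper bound'' is reversed; a \emph{yes} gives an upper bound on $L(\mu)$.)
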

\begin{proof}
  We define the \pam whose shrinking interval problem exposes information about $L(\mu)$. To that end, let $c,d$ be two rationals in $(0,1)$, such that $c+d<1$ and $\mu\in S$ the corresponding real number (see the definition of $S$ above). Let $f$ be the bijective \pam with two intervals defined by the rationals $c$ and $d$ as in \Cref{subsec:bij}. In that section we saw that $f$ is topologically similar to a rotation by an angle $\tau$, which in our case a short calculation shows that $\mu=\tau$. Therefore, due to \Cref{lem:similar to a rotation}, we have
  \begin{align*}
    f=h\circ^{-1} R_\mu\circ h,
  \end{align*}
  where $R_\mu$ and $h$ are defined in \eqref{eq:rotation} and \eqref{eq:def h} respectively. From this equivalent definition of $f$, for any real $\gamma\in (0,1)$ and $n\in\nat$, we have 
  \begin{align*}
    n f^n(0) < \gamma\qquad \Leftrightarrow \qquad n\mu\  \mathrm{mod}\ \intg < h(\gamma/n),
  \end{align*}
  because $h$ is a monotone increasing homeomorphism. That statement holds if and only if there is a positive integer $m\in\nat$ such that
  \begin{align}
    \label{eq:yes ineq}
    \left|\mu-\frac m n\right| <h\left(\frac\gamma n\right)\frac 1 n. 
  \end{align}

  Denote by $\algo$ the hypothetical algorithm for deciding the shrinking interval problem. If $\algo(f,\gamma)$ returns \emph{yes}, then there exists natural numbers $n,m$ such that \eqref{eq:yes ineq} holds. If $\algo(f,\gamma)$ returns \emph{no} on the other hand, that implies that for all naturals $n,m$ we have
  \begin{align}
    \label{eq:no ineq}
    \left|\mu-\frac m n\right| \ge h\left(\frac\gamma n\right)\frac 1 n. 
  \end{align}
  Observe that if $\algo(f,\gamma)=\text{\emph{yes}}$ then for any larger $\gamma'$, again we have  $\algo(f,\gamma')=\text{\emph{yes}}$; and symmetrically if $\algo(f,\gamma)=\text{\emph{no}}$ then for any smaller $\gamma'$ we have $\algo(f,\gamma')=\text{\emph{no}}$. It plainly follows that for all $\delta>0$ we can compute an interval $J\subset \unit$ of length at most $\delta$, such that if $\algo(f,\cdot)$ changes answer from \emph{yes} to \emph{no}, it does so for some real $\gamma$ in the interval $J$. This computation is performed by making multiple calls to $\algo(f,\gamma)$ for different~$\gamma$.

  Furthermore, unfolding the definition of the homeomorphism $h$, using the Maclaurin series for $\log(x+1)$, and performing a simple calculation we see that
  \begin{align*}
    \frac {\alpha \gamma} {2 n^2} \le h\left(\frac \gamma n\right)\frac 1 n \le \frac {2\alpha \gamma} {n^2}, 
  \end{align*}
  where $\alpha$ is defined as in \eqref{eq:def alpha}, hence \eg, if $\algo(f,\gamma)=\text{\emph{yes}}$ then we have an upper bound for the Lagrange constant, namely $L(\mu)\le 2\alpha\gamma$. More generally, one can show that for any $\epsilon'>0$, and all $n$ sufficiently large we have
  \begin{align*}
    (1-\epsilon')\frac {\alpha \gamma} {n^2} \le h\left(\frac \gamma n\right)\frac 1 n \le (1+\epsilon')\frac {\alpha \gamma} {n^2}. 
  \end{align*}
  Since both $\delta$ and $\epsilon'$ can be taken to be arbitrarily small, the theorem follows. 
\end{proof}
\section{Orbits of Expanding \pam}
\label{sec:dichotomy}
In \cite[Hypothesis 1]{bournez18_reach_probl_one_dimen_piecew_affin_maps}, \cite[Hypothesis 1]{kurganskyy2016reachability}, the authors conjecture that expanding \pam (\ie those whose affine maps all have slope $>1$) have the property that for all $x_0\in\unit$,
\begin{align}
  \label{eq:hyp1}
  \orbit(f,x_0)\text{ is finite, or is dense in }\unit. 
\end{align}
For a subclass of expanding \pam, namely for $\beta$-expansions, Adamczewski and Bugeaud make the same conjecture \cite[Hypothesis~2]{adamczewski07_dynam_dioph_approx}. Even for this subclass proving this conjecture is considered out of reach, and no recent progress has been made \cite{adamczewski}.

However, as we will now show, the property in~\eqref{eq:hyp1} does not hold for general expanding \pam.\footnote{The property probably holds for $\beta$-expansions, as conjectured by Adamczewski and Bugeaud, as well as for some other classes of \pam, \eg complete maps; however it is too strong for general expanding \pam.}
\begin{example}
  Define the \pam $f$ as:
  \begin{align*}
  f(x)=
  \begin{cases}
    \frac 4 3 x  & \text{ if }0\le x < \frac 1 2,\\
    \frac 4 3 x - \frac 1 3 & \text{ if }\frac 1 2 \le x < 1.
  \end{cases}
  \end{align*}
  \begin{center}
      \mfbox{  
        \begin{minipage}{0.4\textwidth}
          \includegraphics[width=\textwidth]{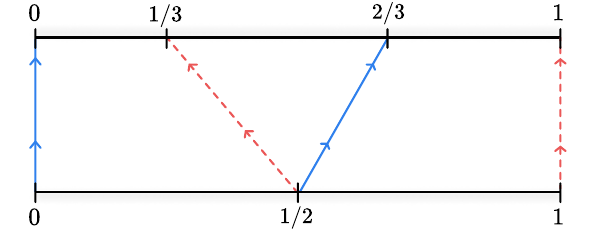}
        \end{minipage}
      }
    \end{center}    

  This is an expanding map as both slopes are $4/3>1$. Consider the orbit $\orbit(f,1/5)$. It is clear that it cannot be dense in all of $\unit$, because the first affine map, $4/3 x$, is monotone increasing, and the infimum of the image of the second affine map is:
  \begin{align*}
    \frac 4 3 \cdot \frac 1 2 - \frac 1 3 = \frac 1 3. 
  \end{align*}
  Therefore the orbit~$\orbit(f,1/5)$ cannot be dense in $\unit$, because \eg the interval $[0,1/5)$ does not intersect it. So it suffices to show that the orbit is infinite, in order to falsify claim \eqref{eq:hyp1}. We will show this by observing that $f$ decreases the $3$-adic valuation by at least one.

  Let $p$ be a prime number, and recall that the \textbf{$p$-adic valuation} of an integer $a$, denoted $v_p(a)$, is defined to be the largest positive integer $n$ such that
  \begin{align*}
    a=p^na',
  \end{align*}
  for some $a'\in \intg$. Then one extends this function to the rationals by defining $v_p(p/q)$ as $v_p(p)-v_p(q)$. We claim that for all $n\in\nat$
  \begin{align*}
    v_3\left(f^n\left(\frac 1 5\right)\right)>v_3\left(f^{n+1}\left(\frac 1 5\right)\right). 
  \end{align*}
  This claim plainly implies that the orbit $\orbit(f,1/5)$ is infinite.

  To prove the claim, observe that when the first affine map, $4/3\ x$, is applied, the $3$-adic valuation $v_3$ always decreases by $1$. As for the second affine map, let $p/q\in\rat$ be such that $q\equiv 0\ (\mathrm{mod}\ 3)$ and $p\not\equiv 0\ (\mathrm{mod}\ 3)$. Then when the second affine map is applied we have:
  \begin{align*}
    \frac 4 3 \cdot\frac p q - \frac 1 3 = \frac {4p-q} {3q}.
  \end{align*}
  Since $4p-q\not\equiv 0\ (\mathrm{mod}\ 3)$, we see that when the second affine map is applied to an irreducible rational whose denominator is divisible by $3$, then the $3$-adic valuation, $v_3$, decreases by~$1$. Finally, since every rational in the orbit, with the exception of $1/5$,  has this property, the claim follows. \qed
\end{example}

To complete this section, we prove a weaker property of expanding \pam, but along the lines of \eqref{eq:hyp1}. Let $X$ be a subset of $\rel$. We say that $p$ is an \textbf{accumulation point} of $X$ if every open neighbourhood of $p$ also contains some $p'\in X$, $p\ne p'$.
\begin{theorem}
  Let $f$ be an expanding \pam. For all $x_0\in\unit$, we have that either $\orbit(f,x_0)$ is finite, or it has infinitely many accumulation points. 
\end{theorem}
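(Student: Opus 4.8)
The plan is to argue by contrapositive: suppose $\orbit(f,x_0)$ is infinite but has only finitely many accumulation points, and derive a contradiction. The key tension to exploit is that an expanding map is \emph{locally} measure-increasing: on each interval $I_k$ of $f$, the restriction $f_k$ multiplies Lebesgue measure by its slope $a_k > 1$. So if the orbit were to concentrate near a finite accumulation set $A = \{p_1,\ldots,p_r\}$, the map would have to keep pushing mass away from that set, and one needs to show this is impossible while the orbit stays infinite.

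First I would set up the finite accumulation set $A$ and observe that all but finitely many points of the orbit lie in an arbitrarily small neighbourhood $U_\varepsilon = \bigcup_i (p_i - \varepsilon, p_i + \varepsilon)$ of $A$; indeed, outside any such neighbourhood there can be only finitely many orbit points (else there would be an accumulation point outside $A$, by Bolzano--Weierstrass applied inside the compact complement). So there is a tail of the orbit, say $\{f^n(x_0) : n \ge N_\varepsilon\}$, entirely inside $U_\varepsilon$. Next I would examine what $f$ does to a point sitting within distance $\varepsilon$ of some $p_i$: either $f$ is continuous and affine (slope $a_k > 1$) in a whole neighbourhood of $p_i$, in which case $f$ maps $(p_i - \varepsilon', p_i + \varepsilon')$ onto an interval of length $a_k \cdot 2\varepsilon'$ around $f(p_i)$, blowing up distances by the factor $a_k$; or $p_i$ is a cutpoint of $f$ (or a cutpoint of some $f^j$), and one must track the one-sided affine pieces separately, but each still expands. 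The point $f(p_i)$ is then itself an accumulation point of $\orbit(f, f(x_0)) = \orbit(f,x_0) \setminus \{x_0\}$, hence lies in $A$, so $f$ maps $A$ (essentially) into $A$ up to the finitely many cutpoint ambiguities; passing to a suitable power $f^M$ one can arrange that each $p_i$ is a fixed point of $f^M$ lying in the interior of an interval of $f^M$, where $f^M$ is affine with slope $\prod a_{k_j} > 1$.

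The contradiction then comes from a pigeonhole-plus-expansion argument. Pick $\varepsilon$ small enough that the $\varepsilon$-balls around the (now fixed, interior) points of $A$ are disjoint and each lies strictly inside the corresponding interval of $f^M$; let $\lambda > 1$ be the minimum of the slopes of $f^M$ at these points. Take $n$ large and consider the finitely many orbit points $f^{Mn}(x_0), f^{M(n+1)}(x_0), \ldots$ — all in $U_\varepsilon$, hence all in $\bigcup_i (p_i - \varepsilon, p_i + \varepsilon)$. Since $f^M$ near $p_i$ is $y \mapsto p_i + \lambda_i(y - p_i)$ with $\lambda_i \ge \lambda > 1$, a point at distance $\eta$ from $p_i$ is sent to one at distance $\lambda_i \eta > \eta$ from $p_i$; so along the subsequence indexed by multiples of $M$, the distance to the nearest $p_i$ \emph{strictly increases} as long as the orbit stays in $U_\varepsilon$ and does not change which $p_i$ is nearest. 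If the orbit point ever has distance exactly $0$ from some $p_i$ it is eventually periodic, contradicting infiniteness; otherwise the distances form a strictly increasing sequence bounded above by $\varepsilon$ — impossible unless the orbit leaves the ball around $p_i$, and it cannot land in the ball around a different $p_j$ without first exiting $U_\varepsilon$ (by disjointness and the fact that $f^M$ is continuous and affine on the relevant piece), again contradicting that the tail stays in $U_\varepsilon$.

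The main obstacle I expect is handling the cutpoints cleanly: an accumulation point $p_i$ may be a discontinuity of $f$ or of the relevant power, so "$f^M$ is affine with slope $> 1$ near $p_i$" needs the one-sided version, and one must check that the orbit tail near $p_i$ does not oscillate between the two sides in a way that evades the expansion estimate. I would deal with this by refining the accumulation set to include the relevant one-sided germs — formally, by working with the finitely many "branches" of $f^M$ meeting a neighbourhood of each $p_i$ and noting each branch is affine and expanding — and by choosing $M$ so that the cyclic structure on $A$ induced by $f$ is exposed (so each branch near $p_i$ maps into a branch near the next accumulation point, and a high power fixes the configuration). Once the local expansion picture is pinned down uniformly, the "strictly increasing bounded sequence of distances" contradiction closes the argument.
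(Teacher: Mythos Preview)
Your proposal is correct and follows essentially the same route as the paper: assume finitely many accumulation points, refine to one-sided germs to handle cutpoints, observe that $f$ induces a self-map on this finite set (hence some power fixes a germ), and then derive a contradiction from the fact that an expanding affine map cannot have a convergent nontrivial orbit near its fixed point. The paper's write-up is slightly leaner in two respects: it passes to one-sided accumulation symbols $p_i^{\pm}$ from the outset rather than first arguing with two-sided balls and later patching, and for the contradiction it simply notes that the tail of $\orbit(f^{\ell},x_0)$ eventually lies in a single interval of $f^{\ell}$, where $f^{\ell}$ is an affine map $g(x)=ax+b$ with $a>1$, and such a $g$ has no Cauchy orbits---your ``distances to $p_i$ increase geometrically and must exit $U_\varepsilon$'' is the same observation unpacked. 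One minor overstatement to tighten: you do not need (and cannot always arrange) that \emph{every} $p_i$ is fixed by $f^M$; you only need one germ on a cycle, which exists because the induced graph on the one-sided germs has out-degree $1$ at every vertex.
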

\begin{proof}
  Let $x_0\in\unit$ be such that $\orbit(f,x_0)$ is infinite, and towards a contradiction, assume that it has finitely many accumulation points: $p_1,\ldots,p_k$. Define the set $A$ as follows.\footnote{We are doing this refinement in order to take care of the cases when one of $p_1,\ldots, p_k$ is also an endpoint of an interval in the definition of $f$.} Put the symbol $p_i^{-}$ in $A$, $1\le i\le k$,  if and only if for every $\epsilon>0$ the orbit $\orbit(f,x_0)$ intersects the interval $(p_i-\epsilon,p_i)$. Similarly, put $p_i^{+}$ in the set $A$ if and only if for all $\epsilon>0$, the orbit intersects $(p_i,p_i+\epsilon)$.

  Now, for all $i\in\set{1,\ldots, k}$ and $s\in\set{-,+}$, such that $p_i^s\in A$, it is clear that there exists some $p_j^{t}\in A$ such that
  \begin{align*}
    \lim_{x\to p_i^s}f(x)=p_j^{t},
  \end{align*}
  since $p_1,\ldots, p_k$ are the only accumulation points of the orbit and $f$~is~a~\pam. Hence there is a graph structure with vertices $A$, and in fact this graph is a cycle. Consequently, there exists some $\ell\in\nat$ and $s\in\set{-,+}$ such that
  \begin{align}
    \label{eq:lim}
    \lim_{x\to p_1^s}f^{\ell}(x)=p_1^s. 
  \end{align}
  We argue that this is impossible. Indeed, it follows from \eqref{eq:lim} that all but finitely many elements of $\orbit(f^\ell,x_0)$ belong to a single interval in the definition of the \pam $f^\ell$. Therefore, after some threshold, the same affine map $g(x):=ax+b$ is being applied. But applying this affine map $n$ times is the same as applying
  \begin{align*}
    g^n(x)=a^nx+\frac {a^n-1} {a-1} b,
  \end{align*}
  as we saw in the proof of \Cref{lem:easy}. The orbit under $g$ cannot be Cauchy when $a>1$, yet in our case the latter holds, since $f$ is expanding and therefore so is $f^\ell$. 
\end{proof}

We do not know whether the methods in this paper can be used to prove stronger results. Experimenting with general \pam with two intervals, or bijective \pam with more than two intervals, one quickly realises that they produce orbits disimilar to the orbits that can be produced by the maps in this paper. For example, they can produce orbits that seem to be dense in only some sub-interval, which is not feasible with injective maps. 


\bibliographystyle{IEEEtran}
\bibliography{IEEEabrv,bibliography}

\begin{thebibliography}{10}
\providecommand{\url}[1]{#1}
\csname url@samestyle\endcsname
\providecommand{\newblock}{\relax}
\providecommand{\bibinfo}[2]{#2}
\providecommand{\BIBentrySTDinterwordspacing}{\spaceskip=0pt\relax}
\providecommand{\BIBentryALTinterwordstretchfactor}{4}
\providecommand{\BIBentryALTinterwordspacing}{\spaceskip=\fontdimen2\font plus
\BIBentryALTinterwordstretchfactor\fontdimen3\font minus
  \fontdimen4\font\relax}
\providecommand{\BIBforeignlanguage}[2]{{%
\expandafter\ifx\csname l@#1\endcsname\relax
\typeout{** WARNING: IEEEtran.bst: No hyphenation pattern has been}%
\typeout{** loaded for the language `#1'. Using the pattern for}%
\typeout{** the default language instead.}%
\else
\language=\csname l@#1\endcsname
\fi
#2}}
\providecommand{\BIBdecl}{\relax}
\BIBdecl

\bibitem{koiran94_comput_with_low_dimen_dynam_system}
\BIBentryALTinterwordspacing
P.~Koiran, M.~Cosnard, and M.~Garzon, ``Computability with low-dimensional
  dynamical systems,'' \emph{Theoretical Computer Science}, vol. 132, no. 1-2,
  pp. 113--128, 1994. [Online]. Available:
  \url{https://doi.org/10.1016/0304-3975(94)90229-1}
\BIBentrySTDinterwordspacing

\bibitem{viana06_ergod_theor_inter_exchan_maps}
\BIBentryALTinterwordspacing
M.~Viana, ``Ergodic theory of interval exchange maps,'' \emph{Revista
  Matem{\'a}tica Complutense}, vol.~19, no.~1, 2006. [Online]. Available:
  \url{https://doi.org/10.5209/rev_rema.2006.v19.n1.16621}
\BIBentrySTDinterwordspacing

\bibitem{bugeaud12}
\BIBentryALTinterwordspacing
Y.~Bugeaud, ``Distribution modulo one and diophantine approximation,'' 2012.
  [Online]. Available: \url{http://dx.doi.org/10.1017/cbo9781139017732}
\BIBentrySTDinterwordspacing

\bibitem{bournez18_reach_probl_one_dimen_piecew_affin_maps}
\BIBentryALTinterwordspacing
O.~Bournez, O.~Kurganskyy, and I.~Potapov, ``Reachability problems for
  one-dimensional piecewise affine maps,'' \emph{International Journal of
  Foundations of Computer Science}, vol.~29, no.~04, pp. 529--549, 2018.
  [Online]. Available: \url{https://doi.org/10.1142/s0129054118410046}
\BIBentrySTDinterwordspacing

\bibitem{boker15_target_discoun_sum_probl}
\BIBentryALTinterwordspacing
U.~Boker, T.~A. Henzinger, and J.~Otop, ``The target discounted-sum problem,''
  in \emph{2015 30th Annual ACM/IEEE Symposium on Logic in Computer Science}, 7
  2015. [Online]. Available: \url{https://doi.org/10.1109/lics.2015.74}
\BIBentrySTDinterwordspacing

\bibitem{asarin02_widen_bound_decid_undec_hybrid_system}
\BIBentryALTinterwordspacing
E.~Asarin and G.~Schneider, \emph{Widening the Boundary between Decidable and
  Undecidable Hybrid Systems*}, ser. CONCUR 2002 - Concurrency Theory.\hskip
  1em plus 0.5em minus 0.4em\relax Springer Berlin Heidelberg, 2002, pp.
  193--208. [Online]. Available: \url{https://doi.org/10.1007/3-540-45694-5_14}
\BIBentrySTDinterwordspacing

\bibitem{asarin12}
E.~Asarin, V.~P. Mysore, A.~Pnueli, and G.~Schneider, ``Low dimensional hybrid
  systems--decidable, undecidable, don't know,'' \emph{Information and
  Computation}, vol. 211, pp. 138--159, 2012.

\bibitem{kannan86_polyn_time_algor_orbit_probl}
\BIBentryALTinterwordspacing
R.~Kannan and R.~J. Lipton, ``Polynomial-time algorithm for the orbit
  problem,'' \emph{Journal of the ACM}, vol.~33, no.~4, pp. 808--821, 1986.
  [Online]. Available: \url{https://doi.org/10.1145/6490.6496}
\BIBentrySTDinterwordspacing

\bibitem{koiran2001topological}
P.~Koiran, ``The topological entropy of iterated piecewise affine maps is
  uncomputable,'' \emph{Discrete Mathematics \& Theoretical Computer Science},
  vol.~4, 2001.

\bibitem{blondel2001stability}
V.~D. Blondel, O.~Bournez, P.~Koiran, and J.~N. Tsitsiklis, ``The stability of
  saturated linear dynamical systems is undecidable,'' \emph{Journal of
  Computer and System Sciences}, vol.~62, no.~3, pp. 442--462, 2001.

\bibitem{blondel2001deciding}
V.~D. Blondel, O.~Bournez, P.~Koiran, C.~H. Papadimitriou, and J.~N.
  Tsitsiklis, ``Deciding stability and mortality of piecewise affine dynamical
  systems,'' \emph{Theoretical computer science}, vol. 255, no. 1-2, pp.
  687--696, 2001.

\bibitem{kurganskyy2016reachability}
O.~Kurganskyy and I.~Potapov, ``Reachability problems for pams,'' in
  \emph{International Conference on Current Trends in Theory and Practice of
  Informatics}.\hskip 1em plus 0.5em minus 0.4em\relax Springer, 2016, pp.
  356--368.

\bibitem{boshernitzan93_dense_orbit_ration}
\BIBentryALTinterwordspacing
M.~D. Boshernitzan, ``Dense orbits of rationals,'' \emph{Proceedings of the
  American Mathematical Society}, vol. 117, no.~4, pp. 1201--1201, 1993.
  [Online]. Available: \url{https://doi.org/10.1090/s0002-9939-1993-1134622-6}
\BIBentrySTDinterwordspacing

\bibitem{LG21}
\BIBentryALTinterwordspacing
M.~Laurent and A.~Nogueira, ``Dynamics of 2-interval piecewise affine maps and
  hecke-mahler series,'' \emph{Journal of Modern Dynamics}, vol.~17, no.~0, pp.
  33--63, 2021. [Online]. Available:
  \url{/article/id/691b7693-8dc1-4b45-8afe-b2f4e99e2f55}
\BIBentrySTDinterwordspacing

\bibitem{LGarxiv}
\BIBentryALTinterwordspacing
------, ``Dynamics of 2-interval piecewise affine maps and {Hecke-Mahler}
  series,'' \emph{CoRR}, 2019. [Online]. Available:
  \url{http://arxiv.org/abs/1907.08655v1}
\BIBentrySTDinterwordspacing

\bibitem{ouaknine13_posit_probl_low_order_linear_recur_sequen}
\BIBentryALTinterwordspacing
J.~Ouaknine and J.~Worrell, ``Positivity problems for low-order linear
  recurrence sequences,'' in \emph{Proceedings of the Twenty-Fifth Annual
  ACM-SIAM Symposium on Discrete Algorithms}, 12 2014. [Online]. Available:
  \url{https://doi.org/10.1137/1.9781611973402.27}
\BIBentrySTDinterwordspacing

\bibitem{khinchin}
\BIBentryALTinterwordspacing
A.~Y. Khinchin, ``Continued fractions,'' \emph{The Mathematical Gazette},
  vol.~21, no. 245, p. 308, 1937. [Online]. Available:
  \url{https://doi.org/10.2307/3607737}
\BIBentrySTDinterwordspacing

\bibitem{adamczewski07_dynam_dioph_approx}
\BIBentryALTinterwordspacing
B.~Adamczewski and Y.~Bugeaud, ``Dynamics for $\beta$-shifts and diophantine
  approximation,'' \emph{Ergodic Theory and Dynamical Systems}, vol.~27, no.~6,
  pp. 1695--1711, 2007. [Online]. Available:
  \url{https://doi.org/10.1017/s0143385707000223}
\BIBentrySTDinterwordspacing

\bibitem{adamczewski}
B.~Adamczewski, personal communication.

\end{thebibliography}
\end{document}